\documentclass[11pt,a4paper]{article}
\usepackage[margin=2cm]{geometry}

\usepackage[utf8]{inputenc}
\usepackage[T1]{fontenc}
\usepackage{algorithm,algorithmic}
\usepackage{latexsym}
\usepackage{amsmath}
\usepackage{amsthm}
\usepackage{amssymb}
\usepackage{epsfig}
\usepackage{palatino}
\usepackage{bm}
\usepackage{microtype}
\usepackage{booktabs}
\usepackage{color}
\usepackage{enumitem}
\usepackage{multirow}
\usepackage[stable]{footmisc}
\usepackage{hyperref}
\usepackage[many]{tcolorbox}
\usepackage{pgfplots}
\pgfplotsset{compat=newest}
\usepackage{tikz,pgf}
\usetikzlibrary{backgrounds}
\usepackage{tikz-layers}
\usepackage{pstricks}
\usepackage{setspace} 

\usepackage[caption=false]{subfig}




\newtheorem{theorem}{Theorem}

\newtheorem{proposition}[theorem]{Proposition}

\theoremstyle{definition}
\newtheorem{definition}{Definition}

\numberwithin{equation}{section}


\def \Z{\mathbb{Z}}

\def \R{\mathbb{R}}

\def \N{\mathbb{N}}

\def \0{\bm{0}}

\def \d{\bm{d}}

\def \r{\bm{r}}
\def \s{\bm{s}}

\def \u{\bm{u}}

\def \x{\bm{x}}
\def \y{\bm{y}}
\def \z{\bm{z}}

\def \lmd{\bm{\lambda}}

\def \calE{\mathcal{E}}
\def \calF{\mathcal{F}}
\def \calG{\mathcal{G}}
\def \calI{\mathcal{I}}
\def \calK{\mathcal{K}}

\def \proj{\mathrm{proj}}

\DeclareMathOperator*{\argmin}{arg\,min}

\allowdisplaybreaks

\usepackage{endnotes}
\let\footnote=\endnote

%


\usepackage{natbib}
 \bibpunct[, ]{(}{)}{,}{a}{}{,}%

\newtheoremstyle{repeatthm}{}{}{\itshape}{}{\bfseries}{.}{.5em}{\thmnote{#3}}
\theoremstyle{repeatthm}
\newtheorem*{repeattheorem}{Theorem}

\title{An ADMM-based Distributed Optimization Method for Solving Security-Constrained AC Optimal Power Flow}

\author{Amin Gholami$^\textbf{a}$, Kaizhao Sun$^\textbf{b}$, Shixuan Zhang$^\textbf{c}$, Xu Andy Sun$^\textbf{d}$}
\date{
    $\textbf{a}$ H. Milton Stewart School of Industrial and Systems Engineering, Georgia Institute of Technology, Atlanta, Georgia 30332;
    $\textbf{b}$ DAMO Academy, Alibaba Group (U.S.), Inc., Bellevue, Washington 98004;
    $\textbf{c}$ Institute for Computational and Experimental Research in Mathematics, Brown University, Providence, Rhode Island 02903;
    $\textbf{d}$ Sloan School of Management, Massachusetts Institute of Technology, Cambridge, Massachusetts 02139}

\begin{document}

\maketitle

\begin{abstract}
In this paper, we study efficient and robust computational methods for solving the security-constrained alternating current optimal power flow (SC-ACOPF) problem, a two-stage nonlinear optimization problem with disjunctive constraints, that is central to the operation of electric power grids. The first-stage problem in SC-ACOPF determines the operation of the power grid in normal condition, while the second-stage problem responds to various contingencies of losing generators, transmission lines, and transformers. The two stages are coupled through disjunctive constraints, which model generators' active and reactive power output changes responding to system-wide active power imbalance and voltage deviations after contingencies. Real-world SC-ACOPF problems may involve power grids with more than $30$k buses and $22$k contingencies and need to be solved within 10-45 minutes to get a base case solution with high feasibility and reasonably good generation cost. 
We develop a comprehensive algorithmic framework to solve SC-ACOPF that meets the challenge of speed, solution quality, and computation robustness. In particular, we develop a smoothing technique to approximate disjunctive constraints into a smooth structure which can be handled by interior-point solvers; 
we design a distributed optimization algorithm to efficiently generate first-stage solutions;
we propose a screening procedure to prioritize contingencies; and finally, we develop a reliable and parallel architecture that integrates all algorithmic components. Extensive tests on industry-scale systems demonstrate the superior performance of the proposed algorithms.\\
\emph{Key words:} {optimal power flow, mixed integer nonlinear programming, distributed optimization}

\end{abstract}

\section{Introduction}
In this paper, we study efficient and robust computational methods for solving security constrained alternating current optimal power flow (SC-ACOPF) problems. 
The ability to solve such a problem in real-world large-scale power systems in a reliable and effective manner will bring various benefits, including reducing short-term operational costs, mitigating transmission congestion, increasing power system security, and improving power system operator's capability to conduct realistic long-term planning study.
Due to its central importance in grid optimization, ACOPF has a vast literature dating back to the 1960s. In the following, we briefly review the literature with a focus on recent advances in computational methods for solving ACOPF and then we summarize the present paper's contributions to the literature.

%
%

\subsection{Literature Review}

ACOPF was first formulated as an optimization problem in \cite{carpentier1962contribution}.
In a basic ACOPF problem, one seeks to 
determine the active and reactive power supply from the generators and the magnitude and phase angle of voltages in the power network in order to meet a given electricity load with the minimum generation cost, where the relation between electric power and voltages is described by alternating current (AC) power flow equations. Due to the highly nonlinear and nonconvex nature of the AC power flow equations, it is known that the class of ACOPF problems is NP-hard, even if the underlying power network has a tree structure~\citep{bienstock2019strong,lehmann2015ac}.

To reduce computational difficulty, a popular approach is to solve a relaxation or an approximation of the ACOPF~(see, e.g., \cite{molzahn2019survey,low2014convex1,low2014convex2} and the references therein).
This includes the direct current optimal power flow (DCOPF) model~\citep{stott2009dc}, linear programming approximation~\citep{coffrin2014linear}, second order conic programming relaxation~\citep{jabr2006radial,kocuk2016strong}, and semidefinite programming relaxation~\citep{bai2008semidefinite,lavaei2011zero}.
However, these approximations or relaxations, if used alone, could not guarantee to produce a feasible solution of the AC power flow equations, which is crucial for the practical applications of ACOPF.
To obtain a feasible solution to the ACOPF problem, it is common to use the interior point method (IPM)~\citep{wu1994direct,torres1998interior,jabr2002primal,wang2007computational}.
A benchmark IPM solver for ACOPF is MATPOWER by~\cite{MATPOWER}.

Security-constrained ACOPF (SC-ACOPF) problems consider post-contingency corrective actions that is coupled to the pre-contingency base case ACOPF \citep{capitanescu2011state}. It can be formulated as a two-stage nonlinear optimization problem, where the first-stage problem is the base case ACOPF problem for the normal operating condition, while the second-stage problem deals with a large set of generation and transmission contingencies.
While SC-ACOPF provides the security required in power system operation~\citep{capitanescu2016critical}, it also brings further challenges with significantly increased problems sizes and complicating constraints.
Common solution strategies for SC-ACOPF include post-contingency network compression~\citep{platbrood2013generic} and decomposition methods that exploit the contingency structure of SC-ACOPF~\citep{phan2013some}.

Moreover, various techniques have been developed to detect and rank a set of more critical contingencies on which the algorithm should focus 
\cite{2016-Baldick-stochastic, 2013-Capitanescu-contingency}.
For instance, in \cite{2016-Baldick-stochastic}, in addition to detecting important contingencies using a scenario identification index, similar contingencies are eliminated from contingency analysis.
We refer to \cite{2016-Baldick-stochastic, 2013-Capitanescu-contingency, 2007-capitanescu-contingency-filtering, 2013-fliscounakis-contingency} for a comprehensive survey on contingency screening.

Decomposition methods that are based on the augmented Lagrangian method (ALM) and its close variant, the alternating direction method of multipliers (ADMM), have shown favorable performance in solving ACOPF problems.
For example, early works \citep{kim1997coarse,baldick1999fast,kim2000comparison} applied an linearized ALM to a regional decomposition of ACOPF. 
\cite{peng2014distributed, peng2015distributed, peng2016distributed} applied ADMM to some convex relaxation of ACOPF on radial networks. 
The numerical success of ADMM has also been observed on nonconvex ACOPF \citep{chung2005implementing,chung2011multi,sun2013fully,erseghe2014distributed,mhanna_adaptive_2019} with the convergence studied under certain technical assumptions \citep{erseghe2014distributed, sun2021two}.
Despite recent studies on nonconvex ADMM \citep{wang2015global,hong2016convergence,jiang2014alternating}, convergence is established only under structural assumptions on the problem data, which cannot be satisfied by SC-ACOPF instances due to complicated constraints in both stages. This difficulty motivates us to go beyond the standard ADMM and propose a two-level framework to facilitate convergence. 

\subsection{Contributions}
This paper studies a class of
SC-ACOPF problems that have not been extensively studied in the literature. In particular, the SC-ACOPF model in this paper uses disjunctive constraints to model generators' active and reactive power output response to system-wide active power imbalance and voltage deviations due to contingencies. Such an SC-ACOPF model is more realistic than the existing ones in describing generators' post-contingency response, however, disjunctive constraints bring significant computational challenges that have not been encountered in the SC-ACOPF literature. 
In order to solve such an SC-ACOPF model, we develop a suite of innovative algorithmic techniques in a robust parallelized computation framework to achieve the combined goal of fast speed, high solution quality, and scalable computation. The contribution of the current paper can be summarized as below.


%
%
\begin{itemize}
    \item We have developed a smoothing technique to replace active and reactive power disjunctive constraints by approximate constraints defined by smooth functions so that efficient interior-point solvers can be utilized on the SC-ACOPF subproblems. 
    
    \item We design a distributed optimization algorithm to efficiently generate first-stage solutions under time limits, where contingencies' information are incorporated into first-stage decision making through updates of dual multipliers. We provide iteration complexity estimates to find an approximate stationary solution under some technical conditions.
     
    \item We propose a screening procedure to prioritize contingencies in order to handle extremely large systems within computational time limits, which has been tested on networks with up to 30k buses and 22k contingencies.
    We define a severity criterion to capture the infeasibilities of an operating point and the incurred penalties under each contingency. Our procedure is inspired by the fact that in practice for large-scale systems a base-case ACOPF solution might be treated as preventive (contrary to corrective measures), i.e., post-contingency rescheduling is not possible. We identify the contingencies for which a base-case ACOPF solution cause severe infeasibilities, hence the algorithm can focus on them to find a post-contingency solution.  
    
    \item We develop a parallel computation framework with various safeguarding mechanisms to ensure robust performance of the proposed algorithms. 
    
    \item We conduct extensive tests on industry-scale systems to demonstrate the superior performance of the proposed algorithms.
\end{itemize}
The paper is organized as follows. In Section \ref{sec:SC-ACOPF_Model}, we introduce the detailed SC-ACOPF model. Section \ref{sec:overallalg} gives an overview of the proposed algorithmic framework. Section \ref{sec:smoothing} introduces the smoothing technique for disjunctive constraints. Section \ref{sec:two_level_admm} proposes the two-level ADMM algorithm for SC-ACOPF with convergence analysis. Section \ref{sec:ranking} develops the contingency ranking algorithm. Section \ref{sec:Other Algorithmic Development} discusses the parallel computation architecture. Section \ref{sec:comp} presents extensive computational results. Section \ref{sec:conclusion} concludes the paper.

\section{The SC-ACOPF Model with Disjunctive Constraints}
\label{sec:SC-ACOPF_Model}

A power network is defined by a tuple $(\calI, \calE, \calF, \calG)$, where $\calI$ is the set of buses (nodes) in the power network, $\calE$ is the set of transmission lines, $\calF$ is the set of transformers, and $\calG$ is the set of generators connected to the buses. 
A \textit{contingency} is defined as an ACOPF subproblem with the underlying power network modified from the original one by removing a generator $g\in\calG$, a transmission line $e\in\calE$, or a transformer $f\in\calF$.
The set of buses remains unchanged for all contingencies. 
The ACOPF subproblem on the original power network is called the \textit{base case} (i.e. pre-contingency) indexed by $k=0$, and the contingencies are indexed by the set $\calK=\{1,2,\dots,|\calK|\}$. 
We call each member $k\in \bar{\calK}:=\calK\cup\{0\}$ a \textit{state} of the power system. Let $\calE_k$ ($\calF_k$, resp.) be the set of transmission lines (transformers, resp.) in state $k\in\bar{\calK}$. Naturally, in base case $\calE_0=\calE$, $\calF_0=\calF$. Let $G_k^P$ be the set of generators that can adjust its active power output in contingency $k\in\calK$. Let $G_{ik}\subset \calG$ be the set of generators connected to bus $i\in\calI$ in state $k\in\bar{\calK}$.

The decision variables include the active power output $p_{gk}$ and the reactive power output $q_{gk}$ of generator $g\in\calG$ in state $k\in \bar{\calK}$, the voltage magnitude $v_{ik}$ and phase angle $\theta_{ik}$, the active power flowing into an edge $h\in\calE_k\cup\calF_k$ at its origin (destination, resp.) end $p_{hk}^o$ ($p_{hk}^d$, resp.) in state $k\in \bar{\calK}$, and similarly the reactive power flows $q_{hk}^o$ and $q_{hk}^d$.
For each contingency $k\in \calK$, a scalar variable $\Delta_k$ models the system-wide active power imbalance caused by the contingency. Its role in determining the response of generators' active power output to the contingency will be described in details soon. The decision also includes slack variables in nodal active and reactive power balance $\sigma_{ik}^{P+}, \sigma_{ik}^{P-}, \sigma_{ik}^{Q+}, \sigma_{ik}^{Q-}$ and the slacks in transmission line and transformer current magnitude limit $\sigma_{hk}^S$ for $i\in \calI, h\in\calE_k\cup\calF_k, k\in\bar{\calK}$.

With the above notations, we can introduce the SC-ACOPF model that we study in this paper \cite{GOChallenge1_formulation}. We first introduce a compact form and then expand into some details.
\begin{subequations}
\begin{align}
    \min\quad & \sum_{g\in\calG} c_g(p_{g0}) + c_0^\sigma +  \frac{1}{|\calK|}\sum_{k\in\calK}c_k^\sigma \label{eq:scopf_obj} \\
    \text{s.t.}\quad & c_k^\sigma = \sum_{i\in\calI} \left(c_{ik}^p(\sigma_{ik}^{P+}+\sigma_{ik}^{P-})+c_{ik}^q(\sigma_{ik}^{Q+}+\sigma_{ik}^{Q-})\right) +  \sum_{e\in\calE}c_{ek}^S(\sigma_{ek}^S)+\sum_{f\in\calF}c_{fk}^S(\sigma_{fk}^S),\;\;\forall k\in\bar{\calK}, \label{eq:scopf_cost}\\
    & p_{ek}^o = p_{e}(v_{i_{e}^ok}, v_{i_{e}^dk}, \theta_{i_{e}^ok}, \theta_{i_{e}^dk}), \;\;
     p_{ek}^d = p_{e}(v_{i_{e}^dk}, v_{i_{e}^ok}, \theta_{i_{e}^dk}, \theta_{i_{e}^ok}), \;\;\forall e\in\calE_k, k\in\bar{\calK}, \label{eq:scopf_realpower_destination} \\
    & q_{ek}^o = q_{e}(v_{i_{e}^ok}, v_{i_{e}^dk}, \theta_{i_{e}^ok}, \theta_{i_{e}^dk}), \;\; q_{ek}^d = q_{e}(v_{i_{e}^dk}, v_{i_{e}^ok}, \theta_{i_{e}^dk}, \theta_{i_{e}^ok}), \;\;\forall e\in\calE_k, k\in\bar{\calK}, \label{eq:scopf_reactivepower_destination} \\
    & p_{fk}^o = p_{f}^o(v_{i_{f}^ok}, v_{i_{f}^dk}, \theta_{i_{f}^ok}, \theta_{i_{f}^dk}), \;\; p_{fk}^d = p_{f}^d(v_{i_{f}^dk}, v_{i_{f}^ok}, \theta_{i_{f}^dk}, \theta_{i_{f}^ok}), \;\;\forall f\in\calF_k, k\in\bar{\calK}, \label{eq:scopf_realpower_tx_destination} \\
    & q_{fk}^o = q_{f}^o(v_{i_{f}^ok}, v_{i_{f}^dk}, \theta_{i_{f}^ok}, \theta_{i_{f}^dk}), \;\; q_{fk}^d = q_{f}^d(v_{i_{f}^dk}, v_{i_{f}^ok}, \theta_{i_{f}^dk}, \theta_{i_{f}^ok}), \;\;\forall f\in\calF_k, k\in\bar{\calK}, \label{eq:scopf_reactivepower_tx_destination} \\
    & \sum_{g\in G_{ik}}p_{gk} - p_{ik}^L = \sum_{e\in E_{ik}^o}p_{ek}^o + \sum_{e\in E_{ik}^d}p_{ek}^d + \sum_{f\in F_{ik}^o}p_{fk}^o + \sum_{f\in F_{ik}^d}p_{fk}^d + \sigma_{ik}^{P+} - \sigma_{ik}^{P-},\;\;\forall i\in\calI, k\in\bar{\calK}, \label{eq:scopf_realpower_nodal} \\
    & \sum_{g\in G_{ik}}q_{gk} - q_{ik}^L = \sum_{e\in E_{ik}^o}q_{ek}^o + \sum_{e\in E_{ik}^d}q_{ek}^d + \sum_{f\in F_{ik}^o}q_{fk}^o + \sum_{f\in F_{ik}^d}q_{fk}^d + \sigma_{ik}^{Q+} - \sigma_{ik}^{Q-},\;\;\forall i\in\calI, k\in\bar{\calK}, \label{eq:scopf_reactivepower_nodal} \\
    & \sqrt{(p_{ek}^o)^2 + (q_{ek}^o)^2}\le \bar{R}_{ek} v_{i_{e}^ok} + \sigma_{ek}^S, \;\;\forall e\in\calE_k, k\in\bar{\calK}, \label{eq:scopf_linepowerlimit_origin}\\
    & \sqrt{(p_{ek}^d)^2 + (q_{ek}^d)^2}\le \bar{R}_{ek} v_{i_{e}^dk} + \sigma_{ek}^S, \;\;\forall e\in\calE_k, k\in\bar{\calK}, \label{eq:scopf_linepowerlimit_destination}\\
    & \sqrt{(p_{fk}^o)^2 + (q_{fk}^o)^2}\le \bar{s}_{fk}  + \sigma_{fk}^S, \;\;\forall f\in\calF_k, k\in\bar{\calK}, \label{eq:scopf_txpowerlimit_origin}\\
    & \sqrt{(p_{fk}^d)^2 + (q_{fk}^d)^2}\le \bar{s}_{fk} + \sigma_{fk}^S, \;\;\forall f\in\calF_k, k\in\bar{\calK}, \label{eq:scopf_txpowerlimit_destination}\\
    & p_{gk} = \proj_{[\underline{p}_g,\bar{p}_g]}(p_{g0} + \alpha_g \Delta_k), \;\; \forall g\in G_k, k\in \calK, \label{eq:scopf_realpowercntg}\\
    & \left\{\underline{q}_g \le q_{gk} \le \bar{q}_g, \; v_{i_{g}k}=v_{i_{g}0}\right\}\cup\left\{q_{gk}=\bar{q}_k, \; v_{i_{g}k}\le v_{i_g0}\right\}\cup\left\{q_{gk}=\underline{q}_k, \; v_{i_{g}k}\ge v_{i_g0}\right\},\;\forall g\in G_k, k\in\calK,\label{eq:scopf_PVPQswitch}\\
    & \underline{v}_{ik}\le v_{ik}\le \bar{v}_{ik}, \;\; \underline{p}_g\le p_{gk} \le \bar{p}_{g}, \;\; \underline{q}_g\le q_{gk} \le \bar{q}_{g}, \;\;  \forall i\in \calI, g\in G_k, k\in\bar\calK, \label{eq:scopf_bounds_vpq}\\
    & p_{gk}=0, \;\; q_{gk}=0, \;\;\forall g\in\calG\setminus G_k, k\in\calK, \label{eq:scopf_inactive}\\
    & \sigma_{ik}^{P+}\ge 0, \;\; \sigma_{ik}^{P-}\ge 0, \;\;\sigma_{ik}^{Q+}\ge 0, \;\; \sigma_{ik}^{Q-}\ge 0, \;\; \sigma_{ek}^{S}\ge 0, \;\; \sigma_{fk}^{S}\ge 0, \;\; \forall i\in \calI, e\in\calE_k, f\in\calF_k, k\in\bar\calK. \label{eq:scopf_slackbounds}
\end{align}\label{eq:scopf}
\end{subequations}

Now let us explain the various functions and constraints in the above SC-ACOPF formulation. The objective function \eqref{eq:scopf_obj} is the sum of the total active power generation cost in the base case in the first term and the weighted sum of the base case penalty cost $c_0^\sigma$ in the second term and the average contingency cost in the third term. The generation cost $c_g(p_{g0})$ is a convex piecewise linear (pwl) increasing function of the active power generation $p_{g0}$ of generator $g$ in base case. The number of linear pieces of $c_g(\cdot)$ can vary with the generator $g$. 
The penalty cost $c_k^\sigma$ for a state $k\in\bar{\calK}$ is defined in constraint \eqref{eq:scopf_cost}, which is composed of three terms. The first term is the total penalty cost of all the active and reactive power slack variables $\sigma_{ik}^{P\pm}$ and $\sigma_{ik}^{Q\pm}$ used in bus power balance constraints \eqref{eq:scopf_realpower_nodal}-\eqref{eq:scopf_reactivepower_nodal}. In particular, the functions $c_{ik}^p(\cdot)$ and $c_{ik}^q(\cdot)$ are convex pwl increasing functions with three pieces, where the first piece has a small penalty price for minor power balance violations, the second piece has more stringent penalty for moderate violations, while the last piece has enormous penalty for remaining violation to encourage the solution to have no constraint violation. The second and the third terms in \eqref{eq:scopf_cost} have two similar convex pwl increasing functions on the slack variables $\sigma_{ek}^S$ appeared in line current magnitude limits \eqref{eq:scopf_linepowerlimit_origin}-\eqref{eq:scopf_linepowerlimit_destination} and $\sigma_{fk}^S$ appeared in transformer current magnitude limits \eqref{eq:scopf_txpowerlimit_origin}-\eqref{eq:scopf_txpowerlimit_destination}. 

Constraints \eqref{eq:scopf_realpower_destination}-\eqref{eq:scopf_reactivepower_destination} are the equations that calculate the active and reactive power flowing into a transmission line at one end of the line. For a line $e\in\calE_k$ in state $k\in\bar{\calK}$, the active power function $p_{e}(a,b,c,d)$ in \eqref{eq:scopf_realpower_destination} has the following form
\begin{subequations}
\begin{align}
    p_{e}(a, b, c, d) = g_e a^2 + (-g_e\cos(c-d)-b_e\sin(c-d))ab.\label{eq:scopf_pe}
\end{align}
The reactive power function $q_e(a,b,c,d)$ in \eqref{eq:scopf_reactivepower_destination} is given by
\begin{align}
    q_{e}(a, b, c, d) = -(b_e + b_e^{CH}/2) a^2 + (b_e\cos(c-d)-g_e\sin(c-d))ab.\label{eq:scopf_qe}
\end{align}
\end{subequations}
In \eqref{eq:scopf_pe}-\eqref{eq:scopf_qe}, the parameters $g_e$ and $b_e$ are the series conductance and susceptance of line $e$, respectively, and $b_e^{CH}$ is the charging susceptance of line $e$ in a $\pi$-model of a transmission line \cite[Section 3.2]{MATPOWER}. Notice that the power flowing into a line $e$ at its destination end in  \eqref{eq:scopf_realpower_destination} and \eqref{eq:scopf_reactivepower_destination} has the same function as the power flowing at the origin end of $e$. The only difference is the voltage magnitudes at the origin and destination ends are switched, so are the angles. 

Constraints \eqref{eq:scopf_realpower_tx_destination}-\eqref{eq:scopf_reactivepower_tx_destination} are the equations that calculate the active and reactive power flowing into a transformer at one end of the transformer. 
The forms of $p_f^o$ and $p_f^d$ (resp.\ $q_f^o$ and $q_f^d$) are similar to $p_e$ in \eqref{eq:scopf_pe} (resp.\ $q_e$ in \eqref{eq:scopf_qe}), but are slightly more complicated (e.g., without the symmetry as $p_{f}^o\neq p_{f}^d$ and $q_{f}^o\neq q_{f}^d$). 
We omit the details here, which can be found in \cite[Section 3.6.5]{GOChallenge1_formulation}.

Constraints \eqref{eq:scopf_realpower_nodal} and \eqref{eq:scopf_reactivepower_nodal} are the nodal active and reactive power balance, respectively. Note that the slack variables $\sigma_{ik}^{P\pm}$ and $\sigma_{ik}^{Q\pm}$ appear in these equations to allow violation of power flow balance. However, recall that the slacks are heavily penalized in the objective function. Therefore, the optimal solution tends to satisfy the nodal power balance without slack. 
In fact, a slightly more general model, where the nodal power balance equations \eqref{eq:scopf_realpower_nodal} and \eqref{eq:scopf_reactivepower_nodal} contain additional terms of shunts, is used in all of our computational experiments in Section~\ref{sec:comp}.
To avoid overly complicate the formulation, we refer any interested readers to the detailed documentation~\citep{GOChallenge1_formulation} for more information on the shunt modeling.

Constraints \eqref{eq:scopf_linepowerlimit_origin}-\eqref{eq:scopf_txpowerlimit_destination} impose the current magnitude limits on transmission lines and transformers. The parameter $\bar{R}_{ek}$ is the maximum current magnitude allowed on line $e$ in state $k$, while $\bar{s}_{fk}$ is the maximum current magnitude allowed through transformer $f$ in state $k$. The slack variables $\sigma_{ek}^S$ and $\sigma_{fk}^S$ are introduced to allow violation of these constraints. 
The bounds \eqref{eq:scopf_bounds_vpq} on the voltages, active power, and reactive power output and \eqref{eq:scopf_slackbounds} on the slack variables are standard. Constraint \eqref{eq:scopf_inactive} dictates that no active and reactive power is produced if a generator is not active in a contingency. 

The key feature that differentiates the model \eqref{eq:scopf} from other SC-ACOPF models is the two constraints \eqref{eq:scopf_realpowercntg} and \eqref{eq:scopf_PVPQswitch}, which describe how the generator active and reactive power respond to contingencies. Specifically, the first one \eqref{eq:scopf_realpowercntg} models the active power response of an active generator $g$ in contingency $k$. It says that the active power generation $p_{gk}$ is equal to $p_g + \alpha_g \Delta_k$, if  $p_{gk}$ is between the two bounds $[\underline{p}_g,\bar{p}_g]$, and is at one of the bounds, otherwise. A plot of $p_{gk}$ as a function of $\Delta_k$ is given in Figure \ref{fig:pgk} in Section \ref{sec:smoothing}. The intuition is that the active power balance established in the base case would be broken in a contingency by, say, the loss of a generator. This would create an active power imbalance, which is accounted for by $\Delta_k$ (which also includes the thermal loss of the network, i.e., the power loss due to the heat generated by the current). The parameter $\alpha_g$ is a pre-determined participation factor for generator $g$ to take up a portion of the total imbalance $\Delta_k$. Note that we do not restrict the sign of $\Delta_k$, which could be negative, e.g. in a transmission line contingency. Constraint \eqref{eq:scopf_realpowercntg} is a disjunctive constraint, and can be reformulated by mixed integer constraints. See \cite[Section 3.14.3]{GOChallenge1_formulation}. One of the key innovations of this study is to \textit{not} use mixed integer reformulations, but rather to exploit a \textit{smooth approximation} of the constraint \eqref{eq:scopf_realpowercntg}, which is amenable for interior-point solvers. The details are given in Section \ref{sec:smoothing}.  

The second constraint \eqref{eq:scopf_PVPQswitch} models the response of reactive power output and voltage magnitude of a generator in a contingency. In particular, a generator's reactive power output should be used to hold the bus voltage in the contingency at the same voltage level of the base case as much as possible, until the generator's reactive power output is at its upper or lower bound. This model is motivated by the behavior of a generator's local voltage controller (see Section 2.2.2 in~\cite{2022-intro-paper}).
Note that this is also a disjunctive constraint and can be reformulated as a set of mixed integer constraints. To avoid integer variables, we also introduce smooth approximations to simplify computation.

Overall, the SC-ACOPF model \eqref{eq:scopf} is often formulated as a two-stage mixed integer nonlinear program (MINLP). 
To the best of our knowledge, no off-the-shelf solver could achieve a highly feasible and near-optimal base case solution for large-scale problems in power grid optimization within a stringent time requirement of $10$ to $45$ minutes. 
Starting from the next section, we will describe a practical algorithmic framework and demonstrate that it achieves the above goals with robust performance in a large number of test cases of industry sizes up to 30k buses and 22k contingencies.

\section{Overall Algorithmic Structure}\label{sec:overallalg}
In this section, we give an overview of the proposed algorithmic framework for solving the SC-ACOPF problem \eqref{eq:scopf}, which consists of a two-step strategy. In the first step, we aim to find a base case operating point under a strict time limit, where
the base case solution should also take into account of the effects of post-contingency corrective actions. To this end, we first propose a novel and implementable smooth approximation to reformulate the disjunctive constraints \eqref{eq:scopf_realpowercntg}-\eqref{eq:scopf_PVPQswitch} between base case and contingencies (Section \ref{sec:smoothing}). Then we combine the proposed smooth approximation with a two-level ADMM algorithm to find a base case solution that considers a subset of contingencies, selected from a contingency screening procedure (Section \ref{sec:two_level_admm}). The contingency screening algorithm (Section \ref{sec:ranking}) measures and ranks the severity of each contingency. The convergence of the two-level ADMM algorithm is established under suitable assumptions.

\begin{algorithm}[h!]
	\caption{: An Overview}\label{alg:overview}
	\begin{algorithmic}[1]
            \STATE Select a subset $\cal{K}'\subseteq\cal{K}$ using Algorithm \ref{alg:ctg_ranking} ({Contingency Ranking}) if $|\cal{K}|$ is large;
	    \STATE Obtain the base case solution by Algorithm \ref{alg:two_level} ({Two-level ADMM}) with contingencies  $\cal{K}'$;
	    \STATE Sort contingency list $\cal{K}$ by Algorithm \ref{alg:ctg_ranking} ({Contingency Ranking});
	    \FOR{$k$ in the sorted contingency list $\cal{K}$}\label{alg:overview:ContingencyForLoop}
	    \STATE Obtain the solution for contingency $k$ by Algorithm \ref{alg:recourse} ({Recourse Model Solution Strategy});
	    \ENDFOR
	\end{algorithmic}
\end{algorithm}
In the second step, given the base case operating point reported in the first step, we aim to recover power flow solutions for all contingencies. 
Contingencies are ranked by the contingency screening algorithm given the base case solution found in the first step. Contingencies with high rankings are more likely to give large penalties and will be solved before those with low rankings. Details will be discussed in Section \ref{sec:ranking}.

\section{Smooth Approximation of Disjunctive Constraints}\label{sec:smoothing}

In problem~\eqref{eq:scopf}, all constraints but~\eqref{eq:scopf_realpowercntg} and \eqref{eq:scopf_PVPQswitch} are defined by twice continuously differentiable functions.
In order to use interior-point method solvers, we propose a smooth approximation of the constraints~\eqref{eq:scopf_realpowercntg} and~\eqref{eq:scopf_PVPQswitch} in this section.

To begin with, consider the univariate function \(F(x):=\max\{0,x\}\) for \(x\in\R\), which is not differentiable at \(x=0\).
Picking any positive number \(\epsilon>0\), we can approximate \(F(x)\) by the smooth function \(F^\epsilon(x):=\epsilon\ln(1+\exp(x/\epsilon))\) for \(x\in\R\), with the easily verifiable bound on the approximation error:
\begin{equation}\label{eq:SmoothedApproximation}
    F^\epsilon(x)-\epsilon\ln2\le F(x)\le F^\epsilon(x),\quad\forall\,x\in\R.
\end{equation}
Thus the approximation quality is uniformly controlled by the chosen parameter \(\epsilon>0\).
This approximation turns out to have a long history in nonlinear complementarity problems \citep{chen1996class} and variational analysis \citep{rockafellar2009variational}, with numerous applications in machine learning \citep{lee2001ssvm,schmidt2007fast,chen2012smoothing}.
Now recall that for each \(g\in G_k\) and \(k\in\calK\), the active power response of an active generator is modeled as
\begin{equation}
    p_{gk}(\Delta_k)
    =\proj_{[\underline{p}_g,\bar{p}_g]}(p_{g0} + \alpha_g \Delta_k)
    =\max\{\underline{p}_g,-\max\{-\bar{p}_g,-p_{g0}-\alpha_g\Delta_k\}\},
\end{equation}
which is a composition of maximum functions.
Therefore, we apply the smooth approximation discussed above and obtain the approximate active power response function as
\begin{equation}\label{eq:SmoothedRealPowerDisjunction}
    {p}_{gk}^\epsilon(\Delta_k):=\underline{p}_g+\epsilon\ln\left[1+\frac{\exp[(\bar{p}_g-\underline{p}_g)/\epsilon]}{1+\exp[(\bar{p}_g-p_{g0}-\alpha_g\Delta_k)/\epsilon]}\right].
\end{equation}
Moreover, if we assume that \(p_{g0}+\alpha_g\Delta_k\ge\underline{p}_g\) for all \(g\in G_k\), then the active power response function \(p_{gk}(\Delta_k)\) can be simplified as \(\tilde{p}_{gk}(\Delta_k)=\min\{\bar{p}_g,p_{g0}+\alpha_g\Delta_k\}\).
The approximation in this case becomes
\begin{equation}\label{eq:real_power_smooth_approx_lower_bounded}
    \tilde{p}_{gk}^{\epsilon}(\Delta_k):=\bar{p}_g-\epsilon\ln\left[1+\exp\left(\frac{\bar{p}_g-p_{g0}-\alpha_g\Delta_k}{\epsilon}\right)\right].
\end{equation}
We illustrate the true and approximate active power response functions for this case in Figure~\ref{fig:pgk}.
\begin{figure}[htbp]
    \centering
    \includegraphics[scale=1]{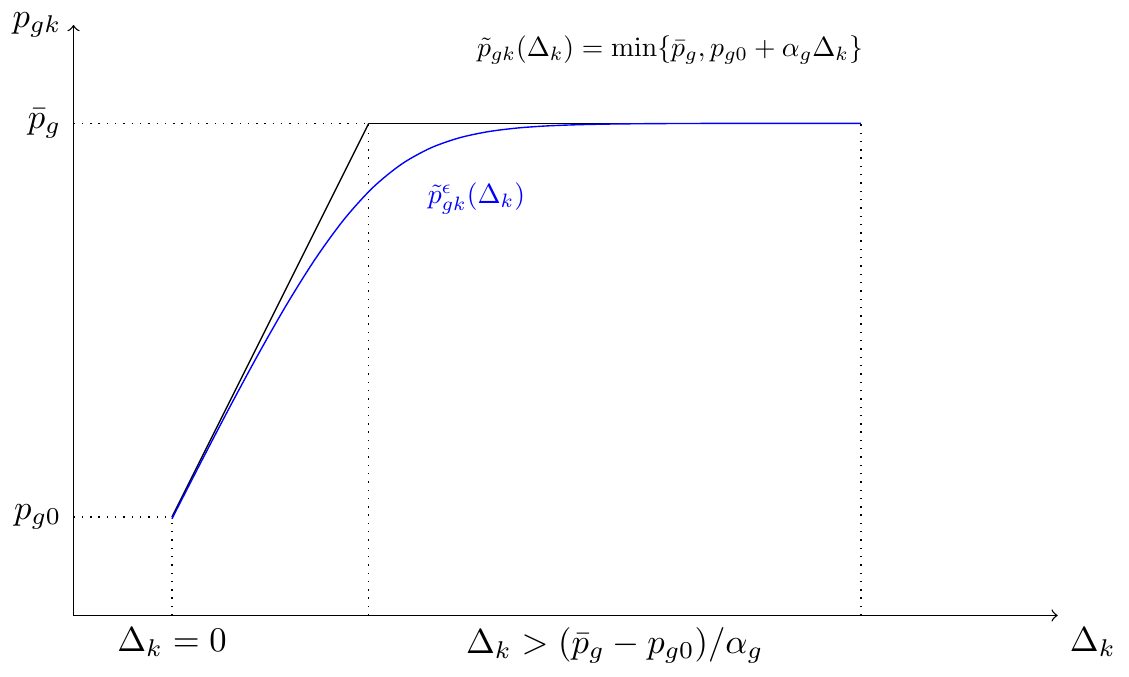}
    \caption{True and Approximate Active Power Response Functions Assuming \(p_{g0}+\alpha_g\Delta_k\ge\underline{p}_g\) for Generator \(g\in G_k\).}
    \label{fig:pgk}
\end{figure}
The following proposition shows that \({p}_{gk}^\epsilon(\Delta_k)\) (resp.\,\(\tilde{p}_{gk}^{\epsilon}(\Delta_k)\)) is a consistent approximation of the function \(p_{gk}(\Delta_k)\) (resp.\,\(\tilde{p}_{gk}(\Delta_k)\)).
\begin{proposition}\label{prop:SmoothedRealDisjunction}
    For each \(g\in G_k\) and \(k\in\calK\), as $\epsilon\to 0$, the approximation errors converge to zero, i.e.,
    \begin{align*}
        \sup\left\{\vert{p}_{gk}^\epsilon(\Delta_k)- p_{gk}(\Delta_k)\vert:\Delta_k\in\R\right\}\to 0, \;\; \sup\left\{\vert\tilde{p}_{gk}^{\epsilon}(\Delta_k)- \tilde{p}_{gk}(\Delta_k)\vert:\Delta_k\ge (\underline{p}_g-p_{g0})/\alpha_g\right\}\to 0.
    \end{align*}
\end{proposition}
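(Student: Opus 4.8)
The plan is to reduce both claims to the single uniform estimate \eqref{eq:SmoothedApproximation}, which states that $0\le F^\epsilon(x)-F(x)\le\epsilon\ln 2$ for every $x\in\R$, where $F(x)=\max\{0,x\}$ and $F^\epsilon$ is its softplus smoothing. The key structural observation is that $p_{gk}(\Delta_k)$ and $\tilde p_{gk}(\Delta_k)$ are each obtained from an affine function of $\Delta_k$ by applying the operator $\max\{0,\cdot\}=F$ twice and once, respectively, and that the smooth surrogates in \eqref{eq:SmoothedRealPowerDisjunction}--\eqref{eq:real_power_smooth_approx_lower_bounded} arise by literally substituting $F^\epsilon$ for $F$ in those compositions. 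It therefore suffices to track how the per-layer error $\epsilon\ln 2$ propagates through one or two nested applications.

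First I would dispatch the one-sided case. Using $\min\{a,b\}=a-\max\{0,a-b\}$ I would write $\tilde p_{gk}(\Delta_k)=\bar p_g-F(\bar p_g-p_{g0}-\alpha_g\Delta_k)$, and observe that $\tilde p_{gk}^\epsilon$ in \eqref{eq:real_power_smooth_approx_lower_bounded} is exactly $\bar p_g-F^\epsilon$ evaluated at the same argument. Consequently $\vert\tilde p_{gk}^\epsilon(\Delta_k)-\tilde p_{gk}(\Delta_k)\vert=\vert F^\epsilon-F\vert$ at $\bar p_g-p_{g0}-\alpha_g\Delta_k$, which is at most $\epsilon\ln 2$ by \eqref{eq:SmoothedApproximation}, uniformly in $\Delta_k$; taking the supremum and letting $\epsilon\to 0$ settles this half. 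The domain restriction $\Delta_k\ge(\underline p_g-p_{g0})/\alpha_g$ only guarantees $p_{g0}+\alpha_g\Delta_k\ge\underline p_g$, which is what makes the min-representation of $\tilde p_{gk}$ valid; it plays no role in the bound itself.

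For the full projection I would first exhibit the nested representation $p_{gk}(\Delta_k)=\underline p_g+F\bigl(\bar p_g-\underline p_g-F(\bar p_g-p_{g0}-\alpha_g\Delta_k)\bigr)$ and verify, via a short $\log$--$\exp$ identity, that replacing both copies of $F$ by $F^\epsilon$ reproduces the closed form \eqref{eq:SmoothedRealPowerDisjunction}. Writing $a$ for the inner true argument and $a^\epsilon$ for its smoothed counterpart, I would split the total error by the triangle inequality, $\vert F^\epsilon(a^\epsilon)-F(a)\vert\le\vert F^\epsilon(a^\epsilon)-F(a^\epsilon)\vert+\vert F(a^\epsilon)-F(a)\vert$. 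The first term is bounded by $\epsilon\ln 2$ directly from \eqref{eq:SmoothedApproximation}; for the second I would use that $F$ is $1$-Lipschitz, so it is at most $\vert a^\epsilon-a\vert=\vert F^\epsilon-F\vert$ at the inner argument, again at most $\epsilon\ln 2$. This yields $\vert p_{gk}^\epsilon(\Delta_k)-p_{gk}(\Delta_k)\vert\le 2\epsilon\ln 2$ for every $\Delta_k\in\R$, so the supremum vanishes as $\epsilon\to 0$.

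The only mildly delicate point is the propagation of the inner perturbation through the outer layer in the nested case; the clean resolution is the $1$-Lipschitz bound above, which prevents the inner error from being amplified. Everything else is bookkeeping, and no uniformity is lost because the estimate \eqref{eq:SmoothedApproximation} is a constant bound independent of the argument, so both suprema are controlled by a fixed multiple of $\epsilon$.
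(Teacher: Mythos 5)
Your proof is correct and follows essentially the same route as the paper's: both reduce everything to the uniform single-layer bound \eqref{eq:SmoothedApproximation}, handle $\tilde p_{gk}$ directly, and then control the nested case by a triangle-inequality split that yields the same $2\epsilon\ln 2$ bound. The only cosmetic difference is that you absorb the propagated inner error via the $1$-Lipschitz property of $F=\max\{0,\cdot\}$, whereas the paper bounds the corresponding increment of $F^\epsilon$ by an explicit $\log$--$\exp$ computation; both are equally valid since each of $F$ and $F^\epsilon$ is $1$-Lipschitz.
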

The proof is presented in Section~\ref{sec:Proof:SmoothedRealDisjunction}.

The reactive power response constraint~\eqref{eq:scopf_PVPQswitch} can be similarly approximated and relaxed.
Define the feasibility set for the reactive power response as \(S_{gk}:=\{\underline{q}_g \le q_{gk} \le \bar{q}_g, \; v_{i_{g}k}=v_{i_{g}0}\}\cup\{q_{gk}=\bar{q}_k, \; \underline{v}_{i_g}\le v_{i_{g}k}\le v_{i_g0}\}\cup\{q_{gk}=\underline{q}_k, \; v_{i_g 0}\le v_{i_{g}k}\le \bar{v}_{i_g}\}\) for each \(g\in G_k\) and \(k\in\calK\).
Note that \(S_{gk}\) can be equivalently reformulated as the following constraints:
\begin{subequations}
\begin{align}
    (q,v)\in S_{gk}\iff
    \exists\,0\le v_{i_g}^+,v_{i_g}^-\le \bar{v}_{i_g}-\underline{v}_{i_g},\;\mathrm{s.t.}\quad
    &v_{i_gk}=v_{i_g0}+v_{i_g}^+-v_{i_g}^-,\label{eq:SmoothedApproximation:SetConstr1}\\
    &\min\{q_{gk}-\underline{q}_g,v_{i_g}^+\}\le0,\label{eq:SmoothedApproximation:SetConstr2}\\
    &\min\{-q_{gk}+\bar{q}_g,v_{i_g}^-\}\le0,\label{eq:SmoothedApproximation:SetConstr3}\\
    &\underline{q}_g\le q_{gk}\le \bar{q}_g,\; \underline{v}_{i_g}\le v_{i_g k}\le \bar{v}_{i_g}.\label{eq:SmoothedApproximation:SetConstr4}
\end{align}
\end{subequations}
Given any \(\epsilon >0\), similar to the active power response approximation, we may define a relaxation set \({S}^{\epsilon}_{gk}\), consisting of the following continuously differentiable constraints
\begin{subequations}\label{eq:SmoothedConstraintApproximation}
\begin{align}
    (q,v)\in {S}^{\epsilon}_{gk}\iff
    \exists\,0\le v_{i_g}^+,v_{i_g}^-\le \bar{v}_{i_g}-\underline{v}_{i_g},\;\mathrm{s.t.}\quad
    &v_{i_gk}=v_{i_g0}+v_{i_g}^+-v_{i_g}^-,\label{eq:SmoothedApproximation:ApproxConstr1}\\
    &v_{i_g}^+-\epsilon\ln\left[1+\exp\left(\frac{v_{i_g}^+-q_{gk}+\underline{q}_{g}}{\epsilon}\right)\right]\le\epsilon\ln2,\label{eq:SmoothedApproximation:ApproxConstr2}\\
    &v_{i_g}^--\epsilon\ln\left[1+\exp\left(\frac{v_{i_g}^-+q_{gk}-\bar{q}_{g}}{\epsilon}\right)\right]\le\epsilon\ln2,\label{eq:SmoothedApproximation:ApproxConstr3}\\
    &\underline{q}_g\le q_{gk}\le \bar{q}_g,\; \underline{v}_{i_g}\le v_{i_g k}\le \bar{v}_{i_g}.\label{eq:SmoothedApproximation:ApproxConstr4}
\end{align}
\end{subequations}

\begin{proposition}\label{prop:SmoothedReactiveDisjunction}
    For each \(g\in G_k\) and \(k\in\calK\), given any \(\epsilon>0\), we have \(S_{gk}\subseteq {S}_{gk}^{\epsilon}\).
    Moreover, the distance \(\sup_{(q,v)\in {S}_{gk}^{\epsilon}}\inf_{(q',v')\in S_{gk}}\Vert (q,v)-(q',v')\Vert\to 0\) as \(\epsilon\to 0\).
\end{proposition}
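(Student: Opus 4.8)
The plan is to prove the two assertions separately, both by reducing the smoothed inequalities \eqref{eq:SmoothedApproximation:ApproxConstr2}--\eqref{eq:SmoothedApproximation:ApproxConstr3} to the $\min$-constraints \eqref{eq:SmoothedApproximation:SetConstr2}--\eqref{eq:SmoothedApproximation:SetConstr3} via the two-sided bound \eqref{eq:SmoothedApproximation}. The organizing observation is that for \(F(x)=\max\{0,x\}\) one has \(\min\{a,b\}=a-F(a-b)\); hence \eqref{eq:SmoothedApproximation:SetConstr2} is exactly \(v_{i_g}^+-F(v_{i_g}^+-q_{gk}+\underline{q}_g)\le 0\), and \eqref{eq:SmoothedApproximation:ApproxConstr2} is the same expression with \(F\) replaced by \(F^\epsilon\) and a slack \(\epsilon\ln2\) on the right (symmetrically for the \(v_{i_g}^-\) pair). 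Since \eqref{eq:SmoothedApproximation:ApproxConstr1} and \eqref{eq:SmoothedApproximation:ApproxConstr4} are identical to \eqref{eq:SmoothedApproximation:SetConstr1} and \eqref{eq:SmoothedApproximation:SetConstr4}, everything reduces to controlling these two inequalities.

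For the inclusion \(S_{gk}\subseteq S_{gk}^\epsilon\), I would take any \((q,v)\in S_{gk}\) together with its witnesses \(v_{i_g}^\pm\) and use the right half \(F\le F^\epsilon\) of \eqref{eq:SmoothedApproximation}: then \(v_{i_g}^+-F^\epsilon(\cdot)\le v_{i_g}^+-F(\cdot)=\min\{q_{gk}-\underline{q}_g,v_{i_g}^+\}\le 0\le\epsilon\ln2\), which is \eqref{eq:SmoothedApproximation:ApproxConstr2}, and analogously for \eqref{eq:SmoothedApproximation:ApproxConstr3}. The same witnesses therefore certify \((q,v)\in S_{gk}^\epsilon\), giving the containment for every \(\epsilon>0\).

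For the distance bound I would instead use the left half \(F^\epsilon-\epsilon\ln2\le F\). From \eqref{eq:SmoothedApproximation:ApproxConstr2} one gets \(v_{i_g}^+-\epsilon\ln2\le F^\epsilon(\cdot)\le F(\cdot)+\epsilon\ln2\), hence \(\min\{q_{gk}-\underline{q}_g,v_{i_g}^+\}=v_{i_g}^+-F(\cdot)\le 2\epsilon\ln2\), and symmetrically \(\min\{\bar{q}_g-q_{gk},v_{i_g}^-\}\le 2\epsilon\ln2\). Writing \(\delta:=2\epsilon\ln2\), any \((q,v)\in S_{gk}^\epsilon\) with witnesses \(v_{i_g}^\pm\) thus satisfies the \(\delta\)-relaxed disjunctions together with \(v_{i_gk}=v_{i_g0}+v_{i_g}^+-v_{i_g}^-\) and the box constraints. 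I would then run the case analysis over the four combinations of the two disjunctions. When \(v_{i_g}^+,v_{i_g}^-\le\delta\), then \(|v_{i_gk}-v_{i_g0}|\le\delta\) and \((q_{gk},v_{i_g0})\) lies on the horizontal piece \(\{\underline q_g\le q\le\bar q_g,\,v=v_{i_g0}\}\subset S_{gk}\), giving distance \(\le\delta\); when \(q_{gk}\le\underline{q}_g+\delta\) and \(v_{i_g}^-\le\delta\), I project onto the piece \(\{q=\underline q_g,\,v_{i_g0}\le v\le\bar v_{i_g}\}\) or onto the corner \((\underline q_g,v_{i_g0})\), giving distance \(\le\sqrt2\,\delta\); the mirror case is symmetric; and the case \(q_{gk}\le\underline q_g+\delta\) with \(q_{gk}\ge\bar q_g-\delta\) is empty once \(2\delta<\bar q_g-\underline q_g\). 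Taking the supremum yields the uniform bound \(\le 2\sqrt2(\ln2)\,\epsilon\to0\).

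The inclusion and the algebraic reduction are immediate consequences of \eqref{eq:SmoothedApproximation}; the real work is the geometric case analysis. The main obstacle I anticipate is bookkeeping in that step: verifying that each candidate projection actually belongs to \(S_{gk}\) (in particular that it respects \(\underline{v}_{i_g}\le v\le\bar{v}_{i_g}\) and lands in the correct sub-segment), handling the corner junctions \((\underline q_g,v_{i_g0})\) and \((\bar q_g,v_{i_g0})\) cleanly, and confirming the resulting constant is uniform over all feasible points so that the supremum — not merely a pointwise distance — vanishes as \(\epsilon\to0\).
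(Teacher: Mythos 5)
Your proof is correct, and your containment argument coincides with the paper's: both apply the right-hand inequality of \eqref{eq:SmoothedApproximation} to the two smoothed constraints, keeping the same witnesses \(v_{i_g}^{\pm}\). For the distance statement, however, you take a genuinely different route. The paper lifts everything to the compact set \(T_{gk}\) of witness variables, observes that the smoothed constraint functions \(G_{\pm}^{\epsilon}\) converge uniformly to \(G_{\pm}\) on \(T_{gk}\), and runs a purely qualitative sequential-compactness argument: any limit point of points feasible for \(G_{\pm}^{1/j}\le 0\) must satisfy \(G_{\pm}\le 0\), which forces the one-sided Hausdorff distance of the projections to vanish. That argument is shorter, needs no geometric picture of \(S_{gk}\), and would work verbatim for any uniformly convergent smoothing of finitely many constraints over a compact set --- but it produces no rate. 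You instead extract the quantitative bounds \(\min\{q_{gk}-\underline{q}_g,v_{i_g}^{+}\}\le 2\epsilon\ln 2\) and \(\min\{\bar{q}_g-q_{gk},v_{i_g}^{-}\}\le 2\epsilon\ln 2\) from the two-sided estimate \eqref{eq:SmoothedApproximation} via the identity \(\min\{a,b\}=b-F(b-a)\), and then project case by case onto the three pieces of \(S_{gk}\), obtaining an explicit uniform \(O(\epsilon)\) rate (\(2\sqrt{2}\,(\ln 2)\,\epsilon\)). That is strictly more information than the proposition asserts, at the price of the bookkeeping you flag. The one loose end is your claim that the fourth case (\(q_{gk}\le\underline{q}_g+\delta\) and \(q_{gk}\ge\bar{q}_g-\delta\)) is empty: it is empty only once \(2\delta<\bar{q}_g-\underline{q}_g\), which for fixed \(g,k\) with \(\bar{q}_g>\underline{q}_g\) holds for all small \(\epsilon\) and so suffices for the limit statement, but the degenerate case \(\underline{q}_g=\bar{q}_g\) should be dispatched separately (there \(S_{gk}\) is the full vertical segment \(\{q=\underline{q}_g,\ \underline{v}_{i_g}\le v\le\bar{v}_{i_g}\}\), so the distance is at most \(\delta\) anyway). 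With that remark added, your argument is complete.
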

The proof is presented in Section~\ref{sec:Proof:SmoothedReactiveDisjunction}.

While it appears tempting from these propositions to choose a small \(\epsilon\) for higher approximation accuracy, we point out by the following calculation the potential numerical issues.
In the general form, the approximation \(F^\epsilon(x)\) of the function \(F(x)=\max\{0,x\}\) has a second-order derivative 
\[
\frac{\mathrm{d}^2}{\mathrm{d} x^2} F^\epsilon(0)=\left(\frac{1}{\epsilon}\frac{\exp(x/\epsilon)}{(1+\exp(x/\epsilon))^2}\right)\bigg\vert_{x=0}=\frac{1}{4\epsilon}\to\infty,\quad\text{ as }\epsilon\to 0.
\]
Therefore, we face the trade-off between approximation accuracy and numerical condition.
We present our choice of \(\epsilon\) in the numerical experiments in Section~\ref{sec:Weights_and_Penalty Function_Parameters}.

\section{Distributed Optimization for Solving SC-ACOPF}\label{sec:two_level_admm}

\subsection{Distributed Reformulation}
In this section, we propose an ADMM-based distributed algorithm for finding an approximate stationary solution to a smoothed version of the SC-ACOPF problem \eqref{eq:scopf}.
We first introduce some notations to simplify the presentation. Let
\begin{align}
	\x_k := \Big(\{v_{ik}, \theta_{ik}, \sigma^{P\pm}_{ik}, \sigma^{Q\pm}_{ik}\}_{i \in \mathcal{I}}, ~\{p_{gk}, q_{gk}\}_{g \in G_k}, ~ \{\sigma^S_{ek}\}_{e\in \mathcal{E}_k}, \{\sigma^S_{fk}\}_{ f\in \mathcal{F}_k}, \Delta_k\Big)
\end{align}
be a column vector that consists of all variables in state $k\in \bar{\mathcal{K}}$, where we set $\Delta_0 = 0 $ for notational consistency. The OPF constraints in the base case can be compactly expressed as 
\begin{align}\label{eq:base_case_constr}
	X_0 := \{\x_0: \eqref{eq:scopf_realpower_destination}-\eqref{eq:scopf_txpowerlimit_destination}, \eqref{eq:scopf_bounds_vpq},\eqref{eq:scopf_slackbounds}\},
\end{align}
where, allowing a minor abuse of notation, constraints in $X_0$ are meant to be satisfied for the base case only. Notice that constraints \eqref{eq:scopf_realpowercntg} and \eqref{eq:scopf_PVPQswitch} involve both contingency variables $\x_k$ and base case variables $\{p_{g0},v_{i_g 0}\}_{g \in G_k}$; in order to formulate the couplings, let $\tilde{X}_k := \{(\x_k, \x_0): \eqref{eq:scopf_realpower_destination}-\eqref{eq:scopf_slackbounds}\}$
denote the feasible region of contingency variables $\x_k$ for each $k\in \mathcal{K}$, which also depends on the base case variables $\x_0$. The proposed algorithm requires some nonlinear optimization solver as the subproblem solution oracle. However, as we mentioned earlier, nonlinear solvers are not able to directly handle nonsmooth and disjunctive constraints in the form of \eqref{eq:scopf_realpowercntg} and \eqref{eq:scopf_PVPQswitch}, and hence $\tilde{X}_k$ defined earlier is not implementable. Therefore, we propose to replace $\tilde{X}_k$ by some proper workaround. For constraint \eqref{eq:scopf_PVPQswitch}, we restrict the contingency voltage variable $v_{i_g k}$ and reactive power $q_{gk}$ in the first disjunction, so that $q_{g k}$ is free to be dispatched within its range. For constraint \eqref{eq:scopf_realpowercntg}, we shall replace it by either the smooth approximation technique introduced in Section \ref{sec:smoothing}, or a simple continuous relaxation of the mixed-integer representation (often known as the ``big-M'' formulation, see Section 2.10 by~\citet{conforti2014integer} for more details). 
Denote the resulting approximated contingency feasible region by $X_k$ for $k\in \mathcal{K}$. 
Further define cost functions
\begin{align}\label{eq:abstrac_obj}
	f_0(\x_0):=  \sum_{g\in\mathcal{G}} c_g(p_{g0}) + \delta c_0^\sigma, \ \text{and} \ f_k(\x_k):= (1-\delta) \frac{1}{|\mathcal{K}|} c_k^\sigma,~\forall k\in K,
\end{align}
where $c_k^\sigma$ is given in \eqref{eq:scopf_cost}. Now we can abstract a smooth approximation of the SC-ACOPF problem \eqref{eq:scopf} as:
\begin{subequations}\label{eq:two_block_scopf}
\begin{align}
	\min \quad & f_0(\x_0) + \sum_{k \in \mathcal{K}} f_k(\x_k)	\\
	\mathrm{s.t.} \quad &  \x_0 = \x^{\text{base}}_k, \ \forall k \in \mathcal{K},\label{eq:two_blocl_couple} \\ 
	& \x_0 \in X_0,\ (\x_k,\x^{\text{base}}_k) \in X_k,\ \forall k \in \mathcal{K},
\end{align}
\end{subequations}
where $\x^{\text{base}}_k$ is a copy of the base variable $\x_0$ kept by contingency state $k \in \mathcal{K}$. 
Before we move on to the algorithmic development, we note that formulation \eqref{eq:two_block_scopf} is not equivalent to the original SC-ACOPF \eqref{eq:scopf} since disjunctive constraints are replaced by their smoothed or relaxed versions in $X_k$ for $k\in \mathcal{K}$. As we will elaborate more in Section \ref{subsec: Evaluation Methods}, the proposed algorithm is used to generate a first-stage solution $x_0$ that incorporates contingencies' information. For this purpose, the adoption of formulation \eqref{eq:two_block_scopf} is reasonable since 1) the base case solution generated in each iteration of the proposed algorithm remains feasible, i.e., $X_0$ is unmodified,  and 2) $X_k$'s preserve coupling natures in the original problem \eqref{eq:scopf} to some extend with an enhanced computational tractability. 

\subsection{An ADMM-based Decomposition Algorithm}
Problem \eqref{eq:two_block_scopf} has two blocks of variables, $\x_0$ and $\{\x_k, \x^{\text{base}}_k\}_{k\in \mathcal{K}}$, where variables $\{\x^{\text{base}}_k\}_{k\in \mathcal{K}}$ further admit a block-angular structure in constraint \eqref{eq:two_blocl_couple}. A popular method for decomposing large-scale problems in this form is the alternating direction method of multipliers (ADMM). ADMM minimizes the augmented Lagrangian function alternatingly with respect to $\x_0$ and $\{\x_k, \x^{\text{base}}_k\}_{k\in \mathcal{K}}$. Notice that when $\x_0$ is fixed, the updates of $\{\x_k, \x^{\text{base}}_k\}_{k\in \mathcal{K}}$ are decoupled among $k\in \mathcal{K}$. In this way computation can be distributed. However, it is known that ADMM directly applied to nonconvex problems like \eqref{eq:two_block_scopf} is not guaranteed to converge (see e.g. \cite{wang2015global}).

Now we propose a distributed algorithm with guaranteed global convergence to solve \eqref{eq:two_block_scopf}. The key idea consists of three steps. 
Firstly, we consider the following relaxation of \eqref{eq:two_block_scopf} with three blocks of variables $\x_0$, $\{\x_k, \x^{\text{base}}_k\}_{k\in \mathcal{K}}$, and a new slack variable $\{{\z}_k\}_{k\in \mathcal{K}}$:
\begin{subequations}\label{eq:three_block_relaxation}
\begin{align}
	\min_{\x_0, \{\x_k, \x^{\text{base}}_k\}_{k\in \mathcal{K}}, \{{\z}_k\}_{k\in \mathcal{K}}} \quad & f_0(\x_0) + \sum_{k \in \mathcal{K}} \left(f_k(\x_k)	+\langle \lmd_k, {\z}_k \rangle + \frac{\beta}{2}\|{\z}_k\|^2\right)\label{eq:three_block_relaxation_obj}\\
	\mathrm{s.t.} \quad &  \x_0 - \x^{\text{base}}_k+ {\z}_k = 0, \ \forall k \in \mathcal{K}, \label{eq:three_block_couple_constr}\\
	& \x_0 \in X_0,\ (\x_k,\x^{\text{base}}_k) \in X_k,\ \forall k \in \mathcal{K},
\end{align}
\end{subequations}
where ${\lmd}:=\{\lmd_k\}_{k\in \mathcal{K}}$ and $\beta >0$ are parameters. Notice that problem \eqref{eq:three_block_relaxation} is equivalent to problem \eqref{eq:two_block_scopf} if we explicitly enforce slack variables ${\z}_k$ to be zero as
\begin{align}\label{eq:slack_zero}
	{\z}_k = 0,\quad \forall k\in \mathcal{K}.
\end{align}
Therefore, \eqref{eq:three_block_relaxation} is actually the augmented Lagrangian relaxation (ALR) of \eqref{eq:two_block_scopf}, where constraints \eqref{eq:slack_zero} are relaxed with dual variables $\{\lmd_k\}_{k\in \mathcal{K}}$ and penalized with some $\beta >0$ as in \eqref{eq:three_block_relaxation_obj}.

Secondly, we apply a 3-block ADMM algorithm to solve the augmented Lagrangian relaxation problem \eqref{eq:three_block_relaxation}. Let $\{\y_k\}_{k\in \mathcal{K}}$ be the dual variables corresponding to the coupling constraints \eqref{eq:three_block_couple_constr}. Then given some $\rho >0$, the augmented Lagrangian function associated with problem \eqref{eq:three_block_relaxation} is defined as: 
\begin{align}\label{eq:al_function}
 L_\rho(\x_0, \{\x_k, \x^{\text{base}}_k\}_{k\in \mathcal{K}}, \{{\z}_k\}_{k\in \mathcal{K}}, \{\y_k\}_{k\in \mathcal{K}}):= & f_0(\x_0) + \sum_{k \in \mathcal{K}} \left(f_k(\x_k)	+\langle \lmd_k, {\z}_k \rangle + \frac{\beta}{2}\|{\z}_k\|^2 \right) + \notag \\
	&  \sum_{k \in \mathcal{K}} \left(\langle \y_k, \x_0 - \x^{\text{base}}_k+ {\z}_k \rangle + \frac{\rho}{2}\|\x_0 - \x^{\text{base}}_k+ {\z}_k\|^2 \right).
\end{align}
In iteration $(t+1)$, ADMM performs a Gauss-Seidel update on the primal variables $\x_0$,  $\{\x_k, (\x^{\text{base}}_k)\}_{k\in \mathcal{K}}$, $\{\z_k\}_{k\in \mathcal{K}}$, and the dual variables$\{\y_k\}_{k\in \mathcal{K}}$ in the augmented Lagrangian function $L_\rho$. All updates in $\{\x_k, (\x^{\text{base}}_k)\}_{k\in \mathcal{K}}$ and $\{\z_k\}_{k\in \mathcal{K}}$ can be distributed over contingencies. 

Finally, upon termination of ADMM, the returned solution $(\bar{\x}_0, \{\bar{\x}_k, \bar{\x}^{\text{base}}_k\}_{k\in \mathcal{K}},\{\bar{\z}_k\}_{k\in \mathcal{K}})$ of the augmented Lagrangian relaxation \eqref{eq:three_block_relaxation} may not be feasible for the original SC-OPF problem, i.e., $\z_k$ may not be zero. 
In order to drive the slack variables $\{\z_k\}_{k\in \mathcal{K}}$ to zero, we further update $\{\lmd_k\}_{k\in \mathcal{K}}$ and $\beta >0$ as in the classic augmented Lagrangian method, and restart ADMM to solve a new augmented Lagrangian relaxation. Consequently, we have a two-level ADMM summarized in Algorithm \ref{alg:two_level}. The inner-level index starts with $t=0$ while the outer-level index starts with $r=1$. This is due to the ways that inner and outer iteration complexities are calculated, as we will see in the next subsection. 
Observe that we explicitly project the dual iterate $\lmd_k^r+\beta_r\z_k^r$ onto some predetermined hypercube with lower bounds $\underline{\lmd}_k$ and upper bounds $\overline{\lmd}_k$ for $k\in \mathcal{K}$ to obtain $\lmd_k^{r+1}$. This projection operator is denoted by  $\Pi_{[\underline{\lmd}_k, \overline{\lmd}_k]}$. Such explicit bounds on $\lmd_k^{r+1}$ are standard for global convergence analysis of general nonconvex problems, e.g., see \cite{andreani2008augmentedcpld,andreani2008augmented}, and are indispensable for deriving theoretical iteration estimates. In principle, the hypercube should be large enough to contain dual multipliers corresponding to KKT solutions of the original problem. As we observed numerically, explicit projection steps are not necessary, and we think such behaviors might be related to local convergence properties of ALM.

\begin{algorithm}[h!]
	\caption{: A two-level ADMM for SC-ACOPF}\label{alg:two_level}
	\begin{algorithmic}[1]
		\STATE {Initialize} $\underline{\lmd}_k\leq \lmd_k^0\leq \overline{\lmd}_k$ for all $k\in \mathcal{K}$; $\beta_0, c, \tau> 1$; $\beta_1= \beta_0 c$; $r\gets 1$;
		\WHILE{outer stopping criteria is not satisfied}
		\STATE initialize $\beta \gets \beta_r $, $\rho \gets \tau \beta$, and $\x_0\in X_0$; $\lmd_k \gets \lmd^r_k$, $(\x^0_k, (\x^{\text{base}}_k)^{0}) \in X_k$, and $(\z_k^0, \y_k^0)$ such that $\lmd_k+\beta \z_k^0+\y_k^0= 0$ for all $k\in \mathcal{K}$; $t\gets 0$;
		\WHILE{inner stopping criteria is not satisfied}
        \STATE perform the following updates:
        \begin{subequations}\label{eq:distributed_admm}
        \begin{align}
        	\x_0^{t+1} = & \argmin_{ \x_0 \in X_0} f_0(\x_0) + \sum_{k\in \mathcal{K}} \left(\langle \y_k^t, \x_0 \rangle + \frac{\rho}{2}\| \x_0 - (\x^{\text{base}}_k)^t + \z^t_k\|^2\right);\label{eq:distributed_admm_first}\\
	        (\x^{t+1}_k, (\x^{\text{base}}_k)^{t+1}) = & \argmin_{ (\x_k,\x^{\text{base}}_k) \in X_k} f_k(\x_k) - \langle \y_k^t, \x^{\text{base}}_k \rangle + \frac{\rho}{2}\| \x^{t+1}_0 - \x^{\text{base}}_k + \z^t_k\|^2, \ \forall k \in \mathcal{K}; \label{eq:distributed_admm_second}\\
        	\z^{t+1}_k = &\frac{1}{\beta + \rho}\left( \rho((\x^{\text{base}}_k)^{t+1} - \x^{t+1}_0) - \lmd_k - \y_k^t\right), \ \forall k \in \mathcal{K};\label{eq:distributed_admm_third} \\
        	\y^{t+1}_k = & \y^{t}_k + \rho\left(\x^{t+1}_0 - (\x^{\text{base}}_k)^{t+1}+ \z^{t+1}_k\right), \ \forall k \in \mathcal{K};\label{eq:centralized_admm_dual}
    	\end{align}
        \end{subequations}
        $t\gets t+1$;
		\ENDWHILE
		\STATE denote the solution returned by the inner loop as $(\x^{r}_0, \{ \x^{r}_k, (\x^{\text{base}}_k)^{r}\}_{k\in \mathcal{K}},\{\z^{r}_k\}_{k\in \mathcal{K}})$;
        \STATE update $\lmd^{r+1}_k = \Pi_{[\underline{\lmd}_k,\overline{\lmd}_k]}(\lmd^{r}_k + \beta_r \z^{r}_k),\ \forall k\in \mathcal{K}$, and $\beta_{r+1}=\beta_0 c^{r+1}$; $r\gets r+1$;
		\ENDWHILE
	\end{algorithmic}
\end{algorithm}

Although we consider all contingencies in $\mathcal{K}$ in Algorithm \ref{alg:two_level}, this can be impractical due to the huge number of contingencies or limited computing resources. In contrast, we often run the two-level ADMM with only a subset of contingencies $\mathcal{K}'\subseteq \mathcal{K}$ that are potentially more severe than others. We will discuss how to select those contingencies in Section \ref{sec:ranking}.

Algorithm \ref{alg:two_level} is developed upon prior works \citep{sun2019two, sun2021two}, where single-phased nonlinear network problems are considered. In addition to problem scales, the SC-ACOPF is theoretically more challenging: in the compact formulation  formulation \eqref{eq:two_block_scopf}, both blocks of variables carry nonconvex OPF constraints, and hence the analysis from \citep{sun2019two, sun2021two} cannot be directly applied. 
We note that Algorithm \ref{alg:two_level} presents a rather conceptual algorithmic framework, while various specific issues, such as the qualities of solutions $\x_0^{t+1}$ and $\{\x_k^{t+1}, ((\x)^{\text{base}}_{k})^{t+1}\}_{k\in\mathcal{K}}$ and  termination criteria of the inner-level ADMM, need to be addressed in practice. We present a specific set of assumptions and the corresponding convergence results of the two-level ADMM in the next subsection.

\subsection{Convergence of the Two-level ADMM for SC-ACOPF}
In this subsection, we aim to establish certain global convergence properties of the two-level ADMM algorithm under some technical assumptions. Here, ``global convergence" refers to convergence to an approximate stationary solution from an arbitrary initial point, and should not be confused with convergence to a globally optimal solution.
The presence of nonconvexity in both stages brings challenges in the analysis. To deal with these challenges, we impose some mild conditions on the iterates of the algorithm. The results presented next aim to provide some theoretical supports to the two-level ADMM and help us understand its empirical performance.

We first define an approximate stationary point for the problem \eqref{eq:two_block_scopf} and its ALR problem \eqref{eq:three_block_relaxation}. 
We use $\partial f(\cdot)$ to denote the general subdifferential of a proper lower semi-continuous function $f:\R^n\rightarrow \R\cup\{+\infty\}$ \cite[Definition 8.3]{rockafellar2009variational}, and $\mathcal{N}_{C}(x)$ to denote the general normal cone of some closed set $C\subseteq \R^n$ at $x\in C$ \cite[Definition 6.3]{rockafellar2009variational}.
These two notations are defined in terms of the limiting behaviors of $f$ and $C$ in a neighborhood of the point of interest, and they both possess some useful properties (especially closedness) and calculus rules. We refer interested readers to the classic work on variational analysis by \cite{rockafellar2009variational}. For the purpose of this paper, it suffices to consider them as reasonable generalizations of their convex counterparts in a nonconvex and nonsmooth setting; indeed, they reduce to the standard notations under convexity.

\begin{definition}\label{def:scopf_approx_solution}
	Given $\epsilon>0$, we say $(\x_0, \{\x_k, \x_k^{\text{base}}\}_{k\in \mathcal{K}})$ is an $\epsilon$-stationary solution of problem \eqref{eq:two_block_scopf} if there exist $\{\y_k\}_{k\in \mathcal{K}}$ and 
        \begin{subequations}
            \begin{align}
                \d_0 & \in  \partial f_0(\x_0) +\sum_{k\in \mathcal{K}}\y_k + \mathcal{N}_{X_0}( \x_0 ),  \label{eq:base_case_dual_res}\\
                \d_k & \in  \begin{bmatrix} \tilde{\nabla} f_k(\x_k)\\-\y_k\end{bmatrix}+ \mathcal{N}_{X_k}(\x_k, \x_k^{\text{base}}),\ \text{where} \ \tilde{\nabla} f_k(\x_k) \in \partial f_k (\x_k), \quad \forall k\in \mathcal{K}, \label{eq:ctg_dual_res_main}\\
                \r_k & :=  \x_0 - \x_k^{\text{base}}, \quad \forall k\in \mathcal{K},
            \end{align}
	\end{subequations}
	such that $\max \left\{ \|\d_0\|, \|\d_1\|, \cdots, \|\d_{|\mathcal{K}|}\|,\|\r_1\|, \cdots, \|\r_{|\mathcal{K}|}\| \right \}\leq \epsilon.$
\end{definition}
\begin{definition}\label{def:alr_approx_solution}
	Given $\epsilon>0$,  $\{\lmd_k\}_{k\in \mathcal{K}}$, and $\beta >0$, we say $(\x_0, \{\x_k, \x_k^{\text{base}}\}_{k\in \mathcal{K}}, \{\z_k\}_{k\in \mathcal{K}})$ is an $\epsilon$-stationary solution of problem \eqref{eq:three_block_relaxation} if there exist $\{\y_k\}_{k\in \mathcal{K}}$ and $\d_0$ satisfying \eqref{eq:base_case_dual_res}, $\{\d_k\}_{k\in \mathcal{K}}$ satisfying \eqref{eq:ctg_dual_res_main}, and 
	\begin{subequations}
	\begin{align}
		0 & =  \lmd_k + \beta \z_k + \y_k, \quad \forall k\in \mathcal{K},  \\ 
		\s_k & :=  \x_0 - \x_k^{\text{base}} + \z_k, \quad \forall k\in \mathcal{K},
	\end{align}
	\end{subequations}
	such that $\max \left\{ \|\d_0\|, \|\d_1\|, \cdots, \|\d_{|\mathcal{K}|}\|,\|\s_1\|, \cdots, \|\s_{|\mathcal{K}|}\| \right \}\leq \epsilon$.
\end{definition}
The above two definitions generalize the standard KKT conditions for problems \eqref{eq:two_block_scopf} and \eqref{eq:three_block_relaxation} in the presence of nondifferetiable $f$ and implicit constraints $X_k$'s.
We first present the convergence of the inner-level ADMM to an approximate solution of \eqref{eq:three_block_relaxation}.
\begin{theorem}\label{thm:admm_iter_complex}
	Let $\lmd=\{\lmd_k\}_{k\in \mathcal{K}}$ and $\beta>0$ be given, and $\epsilon >0$. Suppose ADMM generates  iterates $\{\x^t_0,\{\x^t_k, (\x^{\text{base}}_0)^{t}\}_{k\in \mathcal{K}}, \{\z^t_k\}_{k\in \mathcal{K}} \}_{t\in \N}$ that satisfy the following conditions:
	\begin{itemize}
		\item[-] (descent in base case update) $\x_0^{t+1}$ is a stationary point of the optimization problem in \eqref{eq:distributed_admm_first} satisfying 
		\begin{align}
			& f_0(\x^{t+1}_0) + \sum_{k\in \mathcal{K}} \left(\langle \y_k^t, \x^{t+1}_0 \rangle + \frac{\rho}{2}\| \x^{t+1}_0 - (\x^{\text{base}}_k)^t + \z^t_k\|^2\right) \notag \\
			\leq & \  f_0(\x^{t}_0) + \sum_{k\in \mathcal{K}} \left(\langle \y_k^t, \x^{t}_0 \rangle + \frac{\rho}{2}\| \x^{t}_0 - (\x^{\text{base}}_k)^t + \z^t_k\|^2\right);\label{eq:descent_assumption_base}
		\end{align} 
		\item[-] (descent in contingency update) there exists a positive constant $\gamma >0$ such that for all $k \in \mathcal{K}$, $\x^{t+1}_k$ is a stationary point of the optimization problem in \eqref{eq:distributed_admm_second} satisfying
		\begin{align}
			& f_k(\x^{t+1}_k) - \langle \y_k^t, (\x^{\text{base}}_k)^{t+1} \rangle + \frac{\rho}{2}\| \x^{t+1}_0 - (\x^{\text{base}}_k)^{t+1} + \z^t_k\|^2 \notag \\
			\leq & \ f_k(\x^{t}_k) - \langle \y_k^t, (\x^{\text{base}}_k)^{t} \rangle + \frac{\rho}{2}\| \x^{t+1}_0 - (\x^{\text{base}}_k)^{t} + \z^t_k\|^2 - \gamma\beta \|(\x^{\text{base}}_k)^{t+1}-(\x^{\text{base}}_k)^{t}\|^2. \label{eq:descent_assumption_ctg}
		\end{align}
	\end{itemize}
	Then ADMM finds an $\epsilon$-stationary solution of the ALR problem \eqref{eq:three_block_relaxation} in the sense of definition \ref{def:alr_approx_solution} in at most
	\begin{align}\label{eq:admm_iter_complex}
		T\leq \left\lceil  \frac{ 2\rho^2 |\mathcal{K}| (\overline{L}(\lmd,\beta)-\underline{L}(\lmd,\beta))}{\min \{\gamma\beta, (\beta+\rho)/2-\beta^2/\rho \} }\cdot\frac{1}{\epsilon^2 } \right\rceil	
	\end{align}
	iterations, where
	\begin{align}
		\overline{L}(\lmd,\beta) := & f_0(\x^0_0) + \sum_{k \in \mathcal{K}} \left(f_k(\x^0_k)	+\langle \lmd_k, \z^0_k \rangle + \frac{\beta}{2}\|\z^0_k\|^2 \right) \notag \\
	& + \sum_{k \in \mathcal{K}} \left(\langle y^0_k, \x^0_0 - (\x^{\text{base}}_k)^0+ {\z}^0_k \rangle + \frac{\rho}{2}\|\x^0_0 - (\x^{\text{base}}_k)^0+ {\z}^0_k\|^2 \right), \\
		\underline{L}(\lmd,\beta):= & \min \left\{\sum_{g\in \mathcal{G}} c_g(p_{g0}):\ p_{g0}\in [\underline{p}_g, \overline{p}_g], \ \forall g\in \mathcal{G} \right\} - \frac{\|\lmd\|^2}{\beta}.
	\end{align}
\end{theorem}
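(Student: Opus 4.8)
The plan is to run the standard sufficient-decrease analysis for nonconvex ADMM on the augmented Lagrangian $L_\rho$ of \eqref{eq:al_function}, processing the four block updates \eqref{eq:distributed_admm_first}--\eqref{eq:centralized_admm_dual} one at a time and then telescoping. The structural fact that makes this possible is a dual--slack identity: since the $\z_k$-block in \eqref{eq:distributed_admm_third} is unconstrained and its subproblem is $(\beta+\rho)$-strongly convex, combining its first-order optimality condition with the dual step \eqref{eq:centralized_admm_dual} forces $\y_k^{t+1}=-\lmd_k-\beta\z_k^{t+1}$ for every $t\ge 0$ (the initialization $\lmd_k+\beta\z_k^0+\y_k^0=0$ seeds the induction). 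Hence $\y_k^{t+1}-\y_k^t=-\beta(\z_k^{t+1}-\z_k^t)$, so the troublesome dual ascent is controlled by the primal movement of the slack; this identity also makes the dual-feasibility relation $0=\lmd_k+\beta\z_k+\y_k$ in Definition \ref{def:alr_approx_solution} hold with equality at every iterate.

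Next I would establish the sufficient-decrease inequality. The $\x_0$-update lowers $L_\rho$ by assumption \eqref{eq:descent_assumption_base}; the contingency update lowers it by at least $\gamma\beta\sum_{k\in\mathcal{K}}\|(\x^{\text{base}}_k)^{t+1}-(\x^{\text{base}}_k)^t\|^2$ by \eqref{eq:descent_assumption_ctg}; the exact $\z_k$-minimization lowers it by at least $\frac{\beta+\rho}{2}\sum_{k\in\mathcal{K}}\|\z_k^{t+1}-\z_k^t\|^2$ from strong convexity; and the dual step raises it by exactly $\frac1\rho\sum_{k\in\mathcal{K}}\|\y_k^{t+1}-\y_k^t\|^2=\frac{\beta^2}{\rho}\sum_{k\in\mathcal{K}}\|\z_k^{t+1}-\z_k^t\|^2$ via the identity above. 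Summing yields $L_\rho^{t+1}-L_\rho^t\le -m\,\Theta_t$, where $\Theta_t:=\sum_{k\in\mathcal{K}}(\|(\x^{\text{base}}_k)^{t+1}-(\x^{\text{base}}_k)^t\|^2+\|\z_k^{t+1}-\z_k^t\|^2)$ and $m:=\min\{\gamma\beta,(\beta+\rho)/2-\beta^2/\rho\}$, which is strictly positive precisely because $\rho=\tau\beta$ with $\tau>1$. I would then verify $L_\rho^t\ge\underline{L}(\lmd,\beta)$: substituting $\z_k^t=-\frac1\beta(\lmd_k+\y_k^t)$ collapses the slack terms to $\frac1{2\beta}\|\y_k^t\|^2-\frac1{2\beta}\|\lmd_k\|^2$, completing the square on the coupling terms gives a contribution $\ge-\frac1{2\rho}\|\y_k^t\|^2$, and since $\rho>\beta$ the $\|\y_k^t\|^2$ pieces are nonnegative; with $f_k\ge 0$ and $f_0(\x_0^t)\ge\underline{c}:=\min\{\sum_{g\in\mathcal{G}}c_g(p_{g0}):p_{g0}\in[\underline{p}_g,\bar{p}_g]\}$ this leaves $L_\rho^t\ge\underline{c}-\frac{\|\lmd\|^2}{2\beta}\ge\underline{L}(\lmd,\beta)$.

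Finally I would extract the residuals. Telescoping over $t=0,\dots,T-1$ with $L_\rho^0=\overline{L}(\lmd,\beta)$ gives $m\sum_{t=0}^{T-1}\Theta_t\le\overline{L}(\lmd,\beta)-\underline{L}(\lmd,\beta)$, so $\min_t\Theta_t\le(\overline{L}-\underline{L})/(mT)$. Reading the subproblem optimality conditions, the $\x_0$-stationarity yields $\d_0=\rho\sum_{k\in\mathcal{K}}[(\z_k^{t+1}-\z_k^t)-((\x^{\text{base}}_k)^{t+1}-(\x^{\text{base}}_k)^t)]$, the contingency stationarity yields $\d_k=(0,-\rho(\z_k^{t+1}-\z_k^t))$, and $\s_k=\x_0^{t+1}-(\x^{\text{base}}_k)^{t+1}+\z_k^{t+1}=-\frac{\beta}{\rho}(\z_k^{t+1}-\z_k^t)$; applying Cauchy--Schwarz to the sum over $\mathcal{K}$ in $\d_0$ gives $\max\{\|\d_0\|^2,\max_k\|\d_k\|^2,\max_k\|\s_k\|^2\}\le 2\rho^2|\mathcal{K}|\,\Theta_t$. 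Taking $t$ to be the minimizing index, $\Theta_t\le\epsilon^2/(2\rho^2|\mathcal{K}|)$ as soon as $T$ exceeds the bound in \eqref{eq:admm_iter_complex}, certifying iterate $t+1$ as $\epsilon$-stationary.

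I expect the main obstacle to be the bookkeeping in the sufficient-decrease step: making the dual-ascent increase come out to exactly $\beta^2/\rho$ times the slack movement (this is where the $\z_k$/$\y_k$ identity is indispensable) and ensuring the combined coefficient $m$ is strictly positive under $\rho=\tau\beta$. The lower-bound computation leans on the same identity and on $\rho>\beta$, so these two pieces stand or fall together and deserve the most care.
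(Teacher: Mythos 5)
Your proposal is correct and follows essentially the same route as the paper's proof: the same dual--slack identity $\y_k^{t+1}=-\lmd_k-\beta\z_k^{t+1}$, the same per-block sufficient-decrease accounting yielding the constant $\min\{\gamma\beta,(\beta+\rho)/2-\beta^2/\rho\}$, the same lower bound $\underline{L}(\lmd,\beta)$ on the augmented Lagrangian, and the same telescoping plus Cauchy--Schwarz extraction of the residuals. The only (immaterial) variation is in the lower-boundedness step, where you substitute $\z_k^t=-\tfrac{1}{\beta}(\lmd_k+\y_k^t)$ and complete squares in $\y_k^t$ rather than invoking the descent-lemma inequality for $h_k(\z_k)=\langle\lmd_k,\z_k\rangle+\tfrac{\beta}{2}\|\z_k\|^2$ as the paper does; both give the required bound.
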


The proof of Theorem \ref{thm:admm_iter_complex} is provided in \ref{sec:proof_of_admm}. 

Next we present the convergence of the overall two-level ADMM Algorithm \ref{alg:two_level}. 
\begin{theorem}\label{thm:alm_iter_complex}
	Let $\epsilon >0$. Suppose the assumptions in Theorem \ref{thm:admm_iter_complex} hold, and each ADMM returns an $\epsilon$-stationary solution of \eqref{eq:three_block_relaxation} in the sense of Definition \ref{def:alr_approx_solution}. 
	Moreover, assume there exists some $0 < \overline{L} < +\infty$ such that $\overline{L}\geq \overline{L}(\lmd^r, \beta_r)$ for all $r\in \N$. Define constants
	\begin{itemize}
		\item[] $\eta := \min\{\gamma, \frac{\tau+1}{2} - \frac{1}{\tau}\}$,
		\item[] $\Lambda := \max \left\{ \sum_{k\in \mathcal{K}} \|\lmd_k\|^2 : \ \lmd_k \in [\underline{\lmd}_k, \overline{\lmd}_k], \ \forall k\in \mathcal{K}\right\}$,
		\item[] $\underline{L} :=  \min \left\{\sum_{g\in \mathcal{G}} c_g(p_{g0}):\ p_{g0}\in [\underline{p}_g, \overline{p}_g], \ \forall g\in \mathcal{G} \right\}- \frac{\Lambda}{\beta_0}$, and $\delta_L :=\overline{L}- \underline{L}$.
	\end{itemize}
	Then the two-level ADMM (Algorithm \ref{alg:two_level}) finds an $\epsilon$-stationary solution of problem \eqref{eq:two_block_scopf} in at most $R$ outer-level updates, where
	\begin{align}\label{eq:outer_level_bd}
		R \leq \left \lceil   \log_c \left( \frac{4 \delta_L}{\beta_0}\cdot\frac{1}{\epsilon^2} \right)\right \rceil,
	\end{align}
	 and a total of 
	\begin{align}\label{eq:total_iter_complex}
		\left\lceil \left( \frac{2c\tau^2 \beta_0|\mathcal{K}|\delta_L}{(c-1) \eta}\right)\left( \frac{c^R-1}{\epsilon^2} \right)\right \rceil + R	=\mathcal{O}\left(\frac{1}{\epsilon^{4}}\right)
	\end{align}
	inner-level ADMM iterations.
\end{theorem}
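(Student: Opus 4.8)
The plan is to show that each outer round returns a point whose primal residual $\r_k^r := \x_0^r - (\x^{\text{base}}_k)^r$ shrinks at the geometric rate dictated by $\beta_r = \beta_0 c^r$, and then to sum the per-round inner complexity of Theorem \ref{thm:admm_iter_complex}. The guiding observation is that Definitions \ref{def:scopf_approx_solution} and \ref{def:alr_approx_solution} share \emph{verbatim} the stationarity inclusions \eqref{eq:base_case_dual_res}--\eqref{eq:ctg_dual_res_main}; they differ only in the coupling residual ($\r_k$ versus $\s_k = \r_k + \z_k$). Hence, reusing the multipliers $\{\y_k^r\}$ delivered by the inner ADMM, an $\epsilon$-stationary solution of the ALR \eqref{eq:three_block_relaxation} is automatically a solution of \eqref{eq:two_block_scopf} with $\|\d_0\|,\|\d_k\|\le\epsilon$ already in hand; the only quantity left to control is $\|\r_k^r\|$.

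First I would establish the key per-round estimate $\sum_{k\in\mathcal{K}}\|\r_k^r\|^2 \le 4\delta_L/\beta_r + O(1/\beta_r^2)$. To do so, evaluate the augmented Lagrangian $L_{\rho}$ of \eqref{eq:al_function} at the returned iterate and substitute the exact dual feasibility $\y_k^r = -\lmd_k^r - \beta_r\z_k^r$ guaranteed by Definition \ref{def:alr_approx_solution}. After this substitution the $\z_k^r$ and $\s_k^r$ cross terms collapse, via the identity $\tfrac{\beta_r}{2}\|\z_k^r\|^2 - \beta_r\langle\z_k^r,\s_k^r\rangle = \tfrac{\beta_r}{2}\|\r_k^r\|^2 - \tfrac{\beta_r}{2}\|\s_k^r\|^2$ (using $\z_k^r - \s_k^r = -\r_k^r$), so that completing the square against $-\langle\lmd_k^r,\r_k^r\rangle$ yields
\begin{align*}
    L_\rho^{r} = f_0(\x_0^r) + \sum_{k\in\mathcal{K}} f_k(\x_k^r) + \sum_{k\in\mathcal{K}}\frac{\beta_r}{2}\Big\|\r_k^r - \tfrac{\lmd_k^r}{\beta_r}\Big\|^2 - \frac{\|\lmd^r\|^2}{2\beta_r} + \frac{\rho-\beta_r}{2}\sum_{k\in\mathcal{K}}\|\s_k^r\|^2 .
\end{align*}
Since $\rho = \tau\beta_r > \beta_r$, the last sum is nonnegative; bounding $f_0 \ge \min_{p}\sum_g c_g$ and $f_k \ge 0$ from below, bounding $L_\rho^r \le \overline{L}(\lmd^r,\beta_r) \le \overline{L}$ from above by the net descent of $L_\rho$ from its initial value $\overline{L}(\lmd^r,\beta_r)$ underlying Theorem \ref{thm:admm_iter_complex}, and using $\|\lmd^r\|^2 \le \Lambda$ together with $\beta_r \ge \beta_0$, rearranging gives $\sum_k \|\r_k^r - \lmd_k^r/\beta_r\|^2 \le 2\delta_L/\beta_r$; the step $\|\r_k^r\|^2 \le 2\|\r_k^r - \lmd_k^r/\beta_r\|^2 + 2\|\lmd_k^r\|^2/\beta_r^2$ then produces the claimed bound.

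With the key estimate in place, the outer count \eqref{eq:outer_level_bd} follows: as soon as $\beta_r = \beta_0 c^r \ge 4\delta_L/\epsilon^2$, i.e. $r \ge \log_c\!\big(4\delta_L/(\beta_0\epsilon^2)\big)$, one has $\max_k\|\r_k^r\| \le \big(\sum_k\|\r_k^r\|^2\big)^{1/2} \le \epsilon$, so the returned point is $\epsilon$-stationary for \eqref{eq:two_block_scopf} and the outer loop terminates after at most $R$ rounds. For the total inner work, I would substitute $\rho = \tau\beta_r$ and $\beta_r = \beta_0 c^r$ into the per-round bound \eqref{eq:admm_iter_complex}: the denominator becomes $\beta_r\min\{\gamma, \tfrac{\tau+1}{2}-\tfrac1\tau\} = \beta_r\eta$ and the numerator's factor $\overline{L}(\lmd^r,\beta_r)-\underline{L}(\lmd^r,\beta_r) \le \delta_L$, so $T_r \le \tfrac{2\tau^2|\mathcal{K}|\delta_L}{\eta\epsilon^2}\beta_r + 1$. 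Summing the geometric series $\sum_{r=1}^{R}\beta_r = \beta_0 c\,(c^R-1)/(c-1)$ reproduces \eqref{eq:total_iter_complex}, and since $c^R = O(1/\epsilon^2)$ while $R = O(\log(1/\epsilon))$, the total is $O(1/\epsilon^4)$.

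The main obstacle is the per-round estimate: the delicate point is that the inner loop only delivers an \emph{approximately} feasible, \emph{stationary} (rather than minimizing) point, so one cannot argue through the ALR optimal value and must instead ride the net descent of $L_\rho$ down to the uniform ceiling $\overline{L}$. Making this rigorous requires the completing-the-square identities that fold the $\z_k, \s_k, \r_k$ couplings into a single shifted-residual square, and careful bookkeeping of the shift $\lmd_k^r/\beta_r$ and the constants $\underline{L}, \Lambda, \delta_L$ so that the leading coefficient comes out to exactly $4\delta_L$, matching \eqref{eq:outer_level_bd}. A secondary technical check is the uniform upper bound assumption $\overline{L} \ge \overline{L}(\lmd^r,\beta_r)$, which is precisely what prevents the initial augmented Lagrangian values from drifting upward as the multipliers and penalty are updated across outer rounds.
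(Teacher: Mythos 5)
Your proposal follows essentially the same route as the paper's proof: both observe that the dual residuals $\|\d_0\|,\|\d_k\|\le\epsilon$ come for free from the inner-level guarantee, both derive the primal residual bound by pinning the augmented Lagrangian at the returned iterate between the uniform ceiling $\overline{L}$ (via its monotone descent from $\overline{L}(\lmd^r,\beta_r)$) and a lower bound built from $\underline{L}$, and both obtain the total inner complexity by substituting $\rho=\tau\beta_0c^r$ into \eqref{eq:admm_iter_complex} and summing the geometric series. Your identity folding the $\z_k,\s_k$ cross terms into $\tfrac{\beta_r}{2}\|\r_k^r\|^2-\tfrac{\beta_r}{2}\|\s_k^r\|^2$ is exactly the content of the first inequality in the paper's \eqref{eq:alr_lower_bounded}, and your per-round count $T_r\le \tfrac{2\tau^2|\mathcal{K}|\delta_L}{\eta\epsilon^2}\beta_r+1$ and the sum $\beta_0c(c^R-1)/(c-1)$ match the paper verbatim.

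The one step that does not quite close is the passage from the shifted residual to $\|\r_k^r\|$. Completing the square gives $\sum_k\|\r_k^r-\lmd_k^r/\beta_r\|^2\le 2\delta_L/\beta_r$, but undoing the shift via $\|\r_k^r\|^2\le 2\|\r_k^r-\lmd_k^r/\beta_r\|^2+2\|\lmd_k^r\|^2/\beta_r^2$ leaves the extra term $2\Lambda/\beta_r^2$, which you then silently drop when asserting that $\beta_R\ge 4\delta_L/\epsilon^2$ forces $\max_k\|\r_k^R\|\le\epsilon$. That extra term cannot be absorbed into $4\delta_L/\beta_r$ without changing the constant (the definitions do not guarantee $\Lambda/\beta_0\le\delta_L$, and even if they did you would land at $6\delta_L/\beta_r$), so your argument yields the right order for $R$ but not the exact threshold \eqref{eq:outer_level_bd}. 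The paper sidesteps this by applying the weighted Young's inequality $\langle\lmd_k,-\r_k^R\rangle\ge-\|\lmd_k\|^2/\beta_R-\tfrac{\beta_R}{4}\|\r_k^R\|^2$ directly to the cross term: combined with the $\tfrac{\beta_R}{2}\|\r_k^R\|^2$ already present this leaves $\tfrac{\beta_R}{4}\sum_k\|\r_k^R\|^2\le\delta_L$ exactly, with the $\|\lmd\|^2/\beta_R\le\Lambda/\beta_0$ contribution absorbed into $\underline{L}$ by construction. Replacing your completion of the square with that one inequality recovers \eqref{eq:outer_level_bd} verbatim; the rest of your argument stands as written.
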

The proof of Theorem \ref{thm:alm_iter_complex} is provided in \ref{sec:proof_of_alm}. We note that Algorithm \ref{alg:two_level} provides a convenient update scheme that simplifies the iteration complexity analysis in Theorem \ref{thm:alm_iter_complex}, while some other update schemes can perform better in practice. For example, the explicit projection onto the hypercube $[\underline{\lmd}_k,\overline{\lmd}_k]$ may not always be necessary, and we can simply set $\lmd^{r+1}_k = \lmd^{r}_k + \beta_r \z^{r}_k$ for $k\in \mathcal{K}$ as in the classic augmented Lagrangian method with partial elimination of constraints, where local convergence results have been well studied in \cite{bertsekas2014constrained}. In addition, a geometrically increasing sequence of penalty parameters, i.e., $\beta_{r+1} = \beta_0 c^{r+1}$, may be deemed too aggressive for some instances, and in practice we can keep the penalty unchanged, i.e., $\beta_{r+1}=\beta_r$, as long as a sufficient amount of  decrease of infeasibility is observed.
We acknowledge that the assumptions upon which Theorems \ref{thm:admm_iter_complex} and \ref{thm:alm_iter_complex} are established may not be satisfied for all SC-ACOPF instances. Our results in this section aim to provide a theoretical support for this framework. 

\section{Contingency Solution Strategy}\label{sec:ranking}

\subsection{Recourse Models}
For each contingency \(k\in\calK\), we solve the following recourse model
\begin{align}\label{eq:RecourseModel}
    \min\quad&c_k^\sigma\quad
    \mathrm{s.t.}\quad\eqref{eq:scopf_cost}-\eqref{eq:scopf_slackbounds}.
\end{align}
Due to the presence of the constraints~\eqref{eq:scopf_realpowercntg} and~\eqref{eq:scopf_PVPQswitch}, we are not able to directly apply an interior point solver to the problem~\eqref{eq:RecourseModel} as mentioned in Section~\ref{sec:smoothing}.
Instead, we first solve the following smoothed recourse model
\begin{align}\label{eq:SmoothedRecourseModel}
    \min\quad&c_k^\sigma\quad
    \mathrm{s.t.}\quad\eqref{eq:scopf_cost}-\eqref{eq:scopf_txpowerlimit_destination},\eqref{eq:scopf_bounds_vpq}-\eqref{eq:scopf_slackbounds},\eqref{eq:SmoothedRealPowerDisjunction},\text{ and }\eqref{eq:SmoothedConstraintApproximation}.
\end{align}
One potential issue with the smoothed recourse model is that if the approximation parameter \(\epsilon>0\) is not small enough, the obtained solution can be infeasible as the solution values \((\hat{p}_{gk},\hat{q}_{gk},\hat{v}_{i_gk})_{g\in G_k}\) and \(\hat{\Delta}_k\) violate the disjunctive constraints~\eqref{eq:scopf_realpowercntg} and~\eqref{eq:scopf_PVPQswitch}.
When the infeasibility occurs, we can restrict the domain of some active power and voltage variables and resolve the recourse model~\eqref{eq:RecourseModel} so that the disjunctive constraints~\eqref{eq:scopf_realpowercntg} and~\eqref{eq:scopf_PVPQswitch} become smooth on the restricted domain.
More specifically, given a constraint violation threshold \(\mu>0\), we define the sets of generators \(\hat{G}_k^{\rm p,-}:=\{g\in G_k:p_{g0}+\alpha_g\hat{\Delta}_k\le \underline{p}_g+\mu,\;\alpha_g>0\}\), \(\hat{G}_k^{\rm p,+}:=\{g\in G_k:p_{g0}+\alpha_g\hat{\Delta}_k\ge \bar{p}_g-\mu,\;\alpha_g>0\}\), \(\hat{G}_k^{\rm q,-}:=\{g\in G_k:\hat{q}_{gk}\le \underline{q}_g+\mu\}\), and \(\hat{G}_k^{\rm q,+}:=\{g\in G_k:\hat{q}_{gk}\ge \bar{q}_g-\mu\}\).
The restricted recourse model is defined to be
\begin{align}\label{eq:RestrictedRecourseModel}
    \min\quad c_k^\sigma\quad
    \mathrm{s.t.}\quad&\eqref{eq:scopf_cost}-\eqref{eq:scopf_slackbounds}\\
    &p_{gk}=\underline{p}_g,\;\Delta_k\le(\underline{p}_g-p_{g0})/\alpha_g,\;\forall\,g\in\hat{G}_k^{\rm p,-},\notag\\
    &p_{gk}=\bar{p}_g,\;\Delta_k\ge(\bar{p}_g-p_{g0})/\alpha_g,\;\forall\,g\in\hat{G}_k^{\rm p,+},\notag\\
    &q_{gk}=\underline{q}_g,\;v_{i_gk}\ge v_{i_g0},\;\forall\,g\in\hat{G}_k^{\rm q,+},\notag\\
    &q_{gk}=\bar{q}_g,\;\;v_{i_gk}\le v_{i_g0},\;\forall\,g\in\hat{G}_k^{\rm q,-},\notag\\
    &v_{i_gk}=v_{i_g0},\;\forall\,g\in G_k\setminus(\hat{G}_k^{\rm q,+}\cup\hat{G}_k^{\rm q,-}).\notag
\end{align}
We summarize the recourse problem solution strategy in Algorithm~\ref{alg:recourse}.
\begin{algorithm}[h!]
	\caption{: Recourse Model Solution Strategy for Contingency \(k\in\calK\)}\label{alg:recourse}
	\begin{algorithmic}[1]
        \STATE Solve the smoothed recourse model~\eqref{eq:SmoothedRecourseModel}
        \IF{violation of disjunctive constraints~\eqref{eq:scopf_realpowercntg} and~\eqref{eq:scopf_PVPQswitch} \(>\mu\)}
        \STATE Solve the restricted recourse model~\eqref{eq:RestrictedRecourseModel} and update the solutions
        \ENDIF
	\end{algorithmic}
\end{algorithm}

\subsection{Contingency Screening}
Ideally, all contingencies in $\calK$ should be exhaustively considered in the SC-ACOPF problem. However, this can be extremely time consuming when $|\calK|$ is large. 
Motivated by this challenge, in this section we present a method for ranking the contingencies in the list $\calK$ according to their severity, with the most severe ones at the top of the list. 
The proposed contingency ranking is based on a given base case solution (corresponding to $k=0$). Indeed, we estimate the severity of a contingency $k\in{\calK}$ by approximating the penalty cost $c_k^\sigma$ defined in \eqref{eq:scopf_cost}, given that the base case solution is substituted into the constraints \eqref{eq:scopf_realpower_destination}-\eqref{eq:scopf_slackbounds}.
%
%
%

Specifically, let $\tilde{k}\in\calK$ be a generator contingency associated with the outage of generator $\tilde{g}\in \calG \setminus G_{\tilde{k}}$ located at bus ${\tilde{i}}$, and suppose this generator was producing active and reactive powers $p_{\tilde{g}0}$ and $q_{\tilde{g}0}$ in the base case solution. In the sequel, without loss of generality, we assume the given base case solution satisfies constraints \eqref{eq:scopf_cost}-\eqref{eq:scopf_slackbounds} for $k=0$, with all slack variables $\sigma_{i0}^{P\pm}, \sigma_{i0}^{Q\pm}, \sigma_{e0}^S, \sigma_{f0}^S$ being equal to zero.
%
%
Observe that such a base case solution will also satisfy \eqref{eq:scopf_cost}-\eqref{eq:scopf_slackbounds} for $k=\tilde{k}$ with positive slack variables (see \eqref{eq:scopf_realpower_nodal} and \eqref{eq:scopf_reactivepower_nodal})
\begin{align}
\sigma_{\tilde{i}\tilde{k}}^{P-} = \max \{0, p_{\tilde{g}0} \},\;
\sigma_{\tilde{i}\tilde{k}}^{P+} = -\min \{0, p_{\tilde{g}0} \},\;
\sigma_{\tilde{i}\tilde{k}}^{Q-} = \max \{0, q_{\tilde{g}0} \},\;
\sigma_{\tilde{i}\tilde{k}}^{Q+} = -\min \{0, q_{\tilde{g}0} \}. 
\end{align}
Accordingly, the severity criterion, i.e., the penalty cost \eqref{eq:scopf_cost} of a generator contingency $\tilde{k}$ can be approximated as:
\begin{align}
 \tilde{c}_{\tilde{k}}^\sigma :=
 \sum_{i\in\calI} \left(c_{i\tilde{k}}^p(\sigma_{i\tilde{k}}^{P+}+\sigma_{i\tilde{k}}^{P-})+c_{i\tilde{k}}^q(\sigma_{i\tilde{k}}^{Q+}+\sigma_{i\tilde{k}}^{Q-})\right) = 
  c^p_{{\tilde{i}}\tilde{k}} (|p_{\tilde{g}0}|) +c^q_{{\tilde{i}}\tilde{k}} (|q_{\tilde{g}0}|).
\end{align} 
Recall that $c_{ik}^p(\cdot)$ and $c_{ik}^q(\cdot)$ are convex pwl increasing functions. With a similar argument, suppose $\tilde{k}\in\calK$ is a line contingency associated with the outage of line $\tilde{e}\in\calE\setminus\calE_{\tilde{k}}$, and let $p^o_{\tilde{e}0}$ and $q^o_{\tilde{e}0}$ ($p^d_{\tilde{e}0}$ and $q^d_{\tilde{e}0}$, resp.) be the active and reactive power flowing into this line at its origin (destination, resp.) in the base case. Analogously, the given base case solution satisfies \eqref{eq:scopf_cost}-\eqref{eq:scopf_slackbounds} for $k=\tilde{k}$ with positive slack variables in equations \eqref{eq:scopf_realpower_nodal} and \eqref{eq:scopf_reactivepower_nodal}. Therefore, 
\begin{align}
\tilde{c}_{\tilde{k}}^\sigma :=
c^p_{{ i^o_{\tilde{e}} } \tilde{k}} (|p^o_{\tilde{e}0}|) 
+ c^p_{{ i^d_{\tilde{e}} } \tilde{k}} (|p^d_{\tilde{e}0}|) 
+c^q_{{ i^o_{\tilde{e}} }\tilde{k}} (|q^o_{\tilde{e}0}|)
+ c^q_{{ i^d_{\tilde{e}} }\tilde{k}} (|q^d_{\tilde{e}0}|).
\end{align}
The penalty cost of a transformer contingency can be approximated likewise. Now, given a base case solution, we compute the severity index $\tilde{c}_{{k}}^\sigma$ for all $k\in\calK$. Then, contingency ranking is performed by ordering $\tilde{c}_{{k}}^\sigma$ from the greatest to the least.
%
%
%
%
%
The proposed contingency ranking method is summarized in Algorithm \ref{alg:ctg_ranking} and is used to give priority to high-impact contingencies in the SC-ACOPF problem. This will be discussed in Section \ref{sec:Other Algorithmic Development}.

\begin{algorithm}[h]
	\caption{: Contingency Ranking}\label{alg:ctg_ranking}
	\begin{algorithmic}[1]
		\STATE {\textbf{Input}:} base case generations and flows $\{ \{r_{{g}0},r^o_{{e}0}, r^d_{{e}0}, r^o_{{f}0}, r^d_{{f}0} \} \: : \:  \forall {g}\in\calG, {e}\in\calE, {f}\in\calF, r\in\{p,q\} \}$;
		\STATE {\textbf{Output}:} sorted contingency list $\calK$;
		\STATE {Initialize}    $\tilde{c}_{{k}}^\sigma = 0 $ for all $k\in\calK$;
		\FOR{ $k \in \calK $}
		 \IF{ $\calG\setminus G_{k} \ne \emptyset$ } \STATE {Set $g\in\calG\setminus G_{k}$ and find $i\in\calI$ such that $G_{i0}=\{g\}$; 
		 	\STATE Update  $\tilde{c}_{{k}}^\sigma \gets	c^p_{{{i}}{k}} (|p_{{g}0}|) +c^q_{{{i}}{k}} (|q_{{g}0}|)$;
		 } \ELSIF{ $\calE\setminus\calE_{{k}} \ne \emptyset $ } \label{step_line_start}
	       \STATE{ Set $e\in\calE\setminus\calE_{{k}}$ and find $(i^o_e,i^d_e)$ such that $ E^o_{i^o_e 0}\cap E^d_{i^d_e 0}  = \{e\} $; }
	       \STATE{ Update $\tilde{c}_{{k}}^\sigma \gets    	c^p_{{ i^o_{{e}} } {k}} (|p^o_{{e}0}|) 
	       	+ c^p_{{ i^d_{{e}} } {k}} (|p^d_{{e}0}|) + c^q_{{ i^o_{{e}} } {k}} (|q^o_{ {e}0}|) + c^q_{{ i^d_{{e}} }{k}} (|q^d_{{e}0}|) $; } 
	       \ENDIF \label{step_line_end}
	     \STATE \textbf{Repeat} steps \ref{step_line_start} to \ref{step_line_end} for transformers;
		\ENDFOR
		\STATE sort $\calK$ according to value of $\tilde{c}_{{k}}^\sigma, k\in\calK$;
	\end{algorithmic}
\end{algorithm}

\section{Parallel Implementation}
\label{sec:Other Algorithmic Development}

As the for-loop in Algorithm~\ref{alg:overview} (line~\ref{alg:overview:ContingencyForLoop}) and the update step~\eqref{eq:distributed_admm} in Algorithm~\ref{alg:two_level} can be executed in parallel, we are able to use the Message Passing Interface (MPI) to manage the communication between our manager process and numerous worker processes in a multi-node computing environment.
However, to timely update the solutions without delaying the algorithm execution on worker processes, we have designed a manager-worker-writer parallel implementation instead of the usual manager-worker implementation, as described below.

In the beginning, the manager process sends subproblem indices \(k\in\calK\) to each worker process and tells the writer process to output a default solution (i.e., the starting points in the subproblems).
Whenever a worker process finishes a subproblem, it sends the solution back to the manager process and receives a new subproblem index for it to solve.
The subproblem solution is then stored temporarily in the manager process.
After the writer process completes the output, it sends a signal to the manager process, which would then send all the stored subproblem solutions to the writer process.
In this way, the writer process would be able to continuously updating the solution files, which could potentially be of the sizes of tens of gigabytes, while the manager process is able to coordinate the worker processes at the same time.
The parallelization structure is illustrated in Figure~\ref{fig:parallel}.

\begin{figure}[htbp]
    \centering
    \includegraphics[width=.6\textwidth]{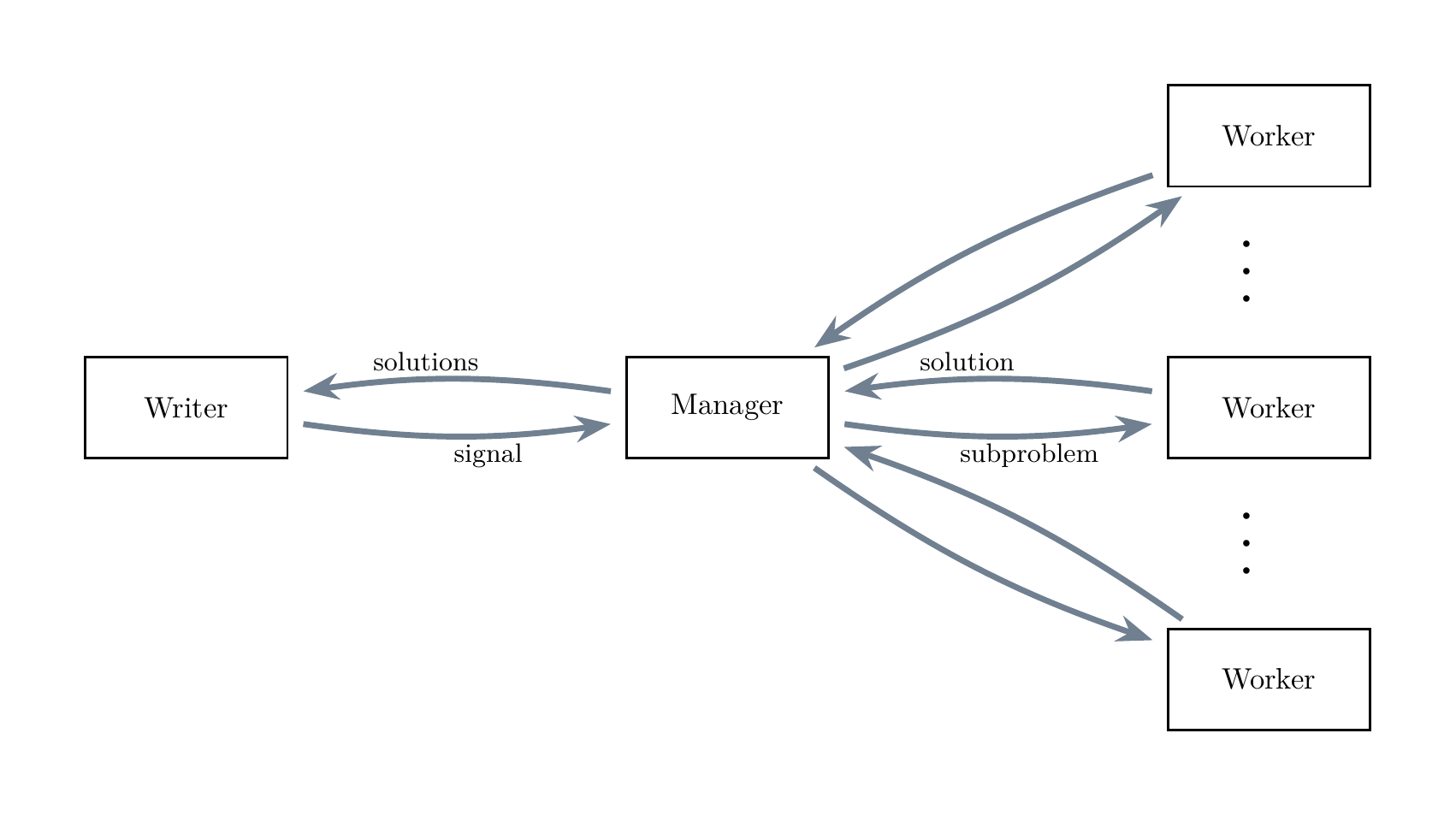}
    \caption{Illustration of Our Parallel Implementation}
    \label{fig:parallel}
\end{figure}


\section{Computational Experiments and Results}
\label{sec:comp}
\subsection{Evaluation Framework and Platform Specifications}
\label{subsec: Evaluation Methods}
In order to carry out a fair evaluation of the proposed algorithm and make a valid comparison with the existing methods, we follow a two-phase assessment proposed by \cite{GOChallenge1_formulation}:
\begin{itemize}
    \item \textbf{Phase I}: The algorithm should report the base case solution $X_0$ within $45$ minutes.
    \item \textbf{Phase II}: The algorithm should report the solution of all contingencies $k\in{\calK}$, given the previously reported base case decision variables in Phase I. The time limit for this phase is $2$ seconds per contingency.
\end{itemize}
%
%
%
Absence of a base case solution from Phase I would directly cause the failure of Phase II evaluation.
We remark that our algorithmic framework is able to produce a base case solution within the first 10 minutes for most of the networks discussed in this section, thanks to the parallel implementation in Section~\ref{sec:Other Algorithmic Development}.

The experiments are conducted on a $6$-node cluster, where each node has $64$ GiB of 4-channel 2133 MHz DDR4 SDRAM memory, two Intel Xeon E$5$-$2670$ v$3$ (Haswell) CPUs, each with 12 cores (24 cores per node) and a clock speed of $2.30$ GHz.
The nonlinear optimization subproblems~\eqref{eq:distributed_admm} and~\eqref{eq:RestrictedRecourseModel} are solved using the solver Ipopt~\citep{wachter2006implementation} version 3.12, with the linear system solver configured to be \cite{HSL} MA57.


\subsection{Datasets and Model Parameters} \label{subsec: datasets}
We adopt a comprehensive dataset composed of $17$ network models which is available at \cite{GODataSet} and its characteristics are summarized in Table \ref{tab: network stat}. Each network model has $20$ different instances which differ in terms of number of components and operating conditions. When the number of components differs for various network instances, a range is specified in Table \ref{tab: network stat}.
Different instances of a network are completely independent and must be solved individually.
Hence, the dataset contains $17\times20=340$ network instances ranging from $500$-bus to $30,000$-bus systems. The number of contingencies $|\calK|$ also reaches $22,000$ in the largest instance.
%
%
%
Further data on penalty function and model parameters are provided in
\ref{sec:Weights_and_Penalty Function_Parameters}.

\subsection{Solution Quality and Scalability}
In Fig. \ref{fig: total obj vs benchmark}, the objective value of the SC-ACOPF problem \eqref{eq:scopf} obtained from our proposed methodology is compared with the ARPA-E benchmark algorithm, whose code is available at \cite{ARPAEBenchmark-Code} (the version used for benchmarking is kept in the directory \texttt{src/script}).
To have a fair comparison, both methodologies use the same platform and are subjected to the same time limit, i.e., $45$ minutes for Phase I and $2$ seconds/contingency for Phase II (see Section \ref{subsec: Evaluation Methods}). The proposed algorithm consistently outperforms ARPA-E benchmark algorithm in all $340$ network instances, thereby resulting in lower generation cost and penalty values. Notice that the objective axis is in the logarithmic scale.
    \begin{figure}[h!]
     	\includegraphics[height=4cm, keepaspectratio=true]{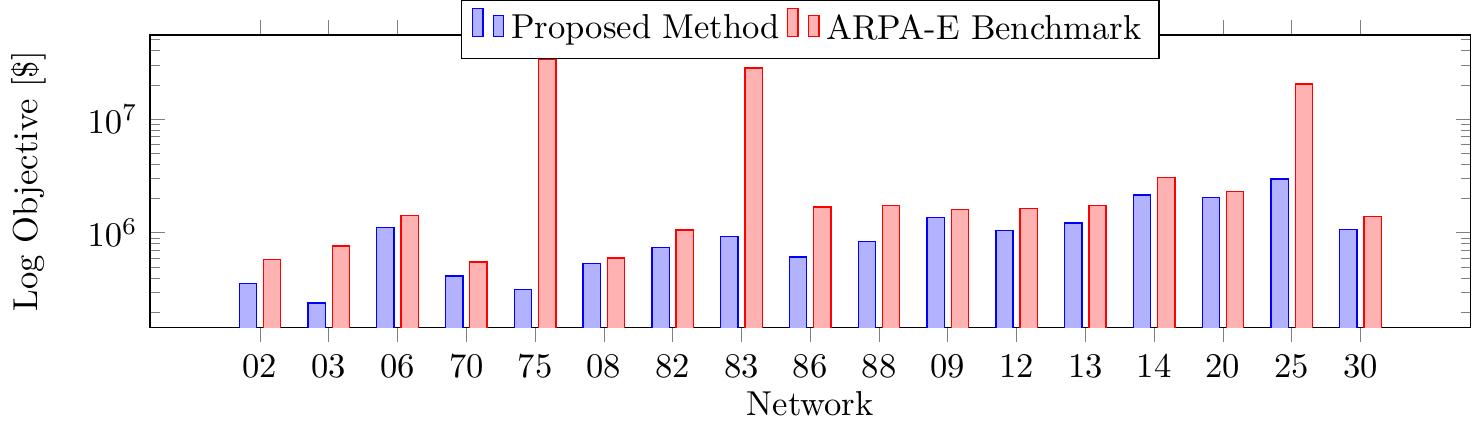}
     	\centering
     	\caption{Comparison between the proposed method and the ARPA-E benchmark method. The vertical axis shows the geometric mean of the objective values associated with all $20$ instances of each network model.}
     	\label{fig: total obj vs benchmark}
    \end{figure}
%
    \begin{figure}[h!]
     	\includegraphics[height=4cm, keepaspectratio=true]{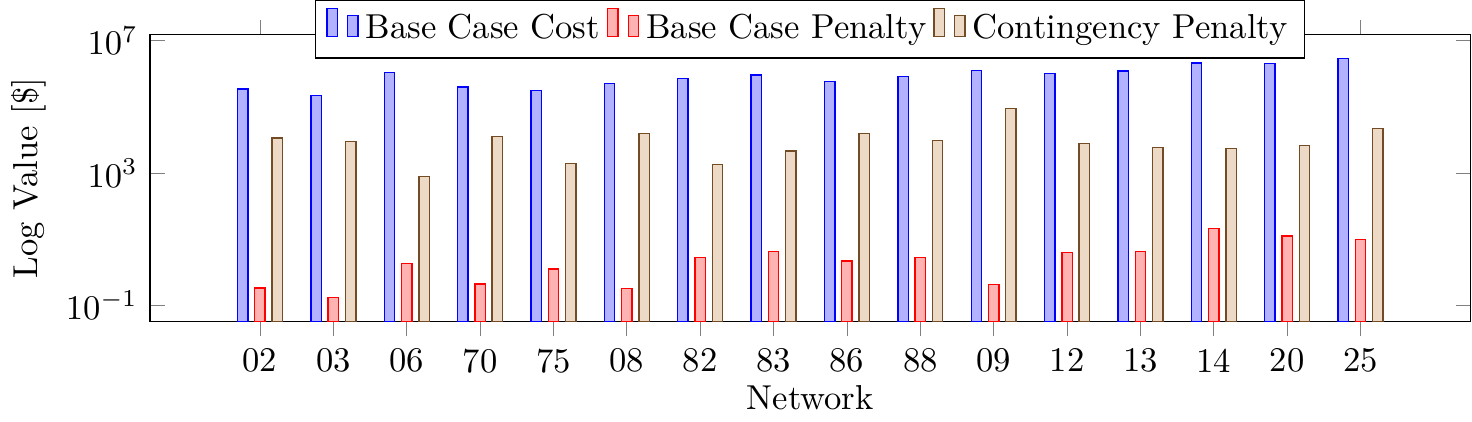}
     	\centering
     	\caption{Breakdown of the SC-ACOPF objective. The vertical axis has logarithmic scale and shows the geometric mean of the values associated with all $20$ instances of each network model.}
     	\label{fig: obj decomposition}
    \end{figure}

Next, Fig. \ref{fig: obj decomposition} provides a breakdown of the objective value \eqref{eq:scopf_obj} that results from three different terms, namely base case cost $\sum_{g\in\calG} c_g(p_{g0})$, base case penalty $c_0^\sigma$, and contingency penalty $({1}/{|\calK|})\sum_{k\in\calK}c_k^\sigma$. As can be observed, the generation cost of base case constitutes the major part of the total objective value. The base case solution is almost AC feasible as its penalty is negligible compared to the other two terms. The contingency penalty can be interpreted as the load shedding cost, which seems inevitable following the occurrence of (not necessarily all, but  some) contingencies in list $\calK$.
Figs. \ref{fig: total obj vs benchmark} and \ref{fig: obj decomposition} show that 
under the time limit of $45$ minutes for Phase I, the proposed Algorithm \ref{alg:overview} is scalable up to networks with $30$k buses and $22$k contingencies.

\subsection{Computation of the Two-Level ADMM}\label{subsec: admm}
In this subsection, we discuss the performance of the two-level ADMM algorithm, which is used to produce a base case solution for Phase I assessment. 
Due to the 45-minute time limit in Phase I, only a subset $\cal{K}'\subseteq \cal{K}$ of contingencies are selected to participate in the two-level ADMM algorithm. Let $n$ denote the number of contingencies solved sequentially on each core in every iteration: we set $n=20$ if $|\mathcal{I}|\in (0, 1000]$,  $n=15$ if $|\mathcal{I}|\in (1000, 5000]$, $n=10$ if $|\mathcal{I}|\in (5000, 10000]$, and $n=5$ if $|\mathcal{I}|\in (10000, +\infty)$. Then the total number of selected contingencies is set to $|\cal{K}'|=\min\{141 \times n, |\cal{K}|\}$ (here 141 is the number of our available worker processes in the cluster), while members of $\cal{K}'$ are determined by a call of Algorithm \ref{alg:ctg_ranking}. See \ref{sec:admm_parameters} for other implementation details. 
Fig. \ref{fig: admm iter} displays the average number of iterations of the two-level ADMM for all networks and the time at which the Phase I base case solution is outputted. Here, ``Inner Iteration" refers to the total number of updates \eqref{eq:distributed_admm_first}-\eqref{eq:centralized_admm_dual} in Algorithm \ref{alg:two_level}, while ``Outer Iteration" refers to the number of times the outer-level dual information $(\lambda, \beta)$ being updated. We terminate the two-level ADMM if the maximum constraint violation of the consensus constraint \eqref{eq:two_blocl_couple}, i.e., $\max_{k\in \cal{K}'} \{\|\x_0-\x^{\text{base}}_k\|_{\infty}\}$, is less than $10^{-4}$. Fig. \ref{fig: admm res} shows the geometric mean of this metric for each network at termination of the two-level ADMM. 

\begin{figure}[h!]
     	\includegraphics[height=4cm, keepaspectratio=true]{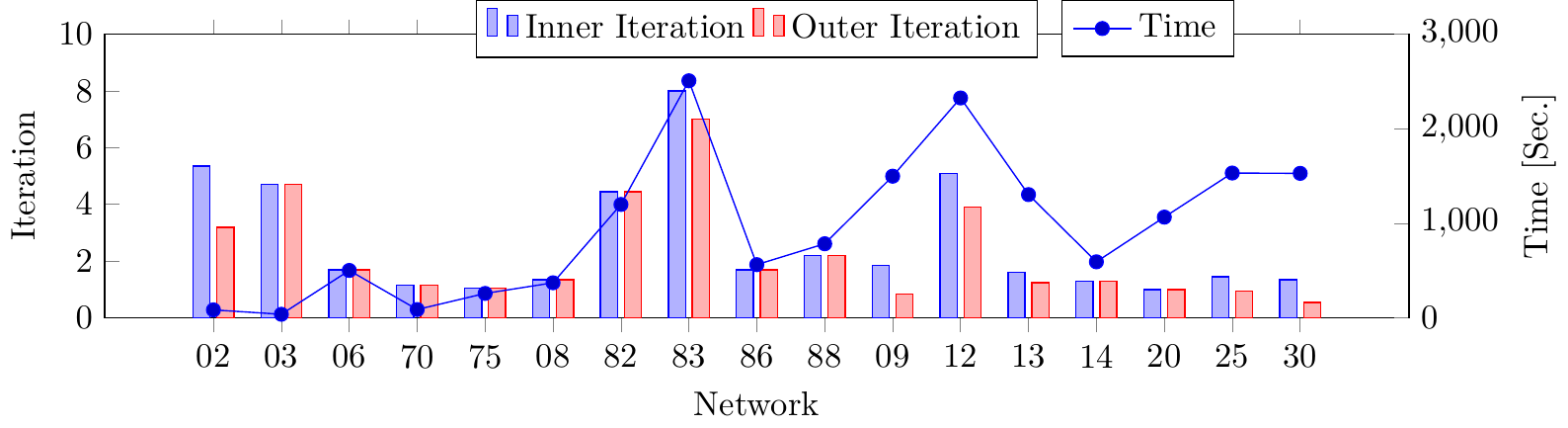}
     	\centering
     	\caption{Number of inner and outer level updates and the time used to produce the base solution. The vertical axis on the left shows the average number of inner/outer updates within the time limit, and the vertical axis on the right shows the average time to finalize the base case in Phase I.}
     	\label{fig: admm iter}
\end{figure}
\begin{figure}[h!]
     	\includegraphics[height=4cm, keepaspectratio=true]{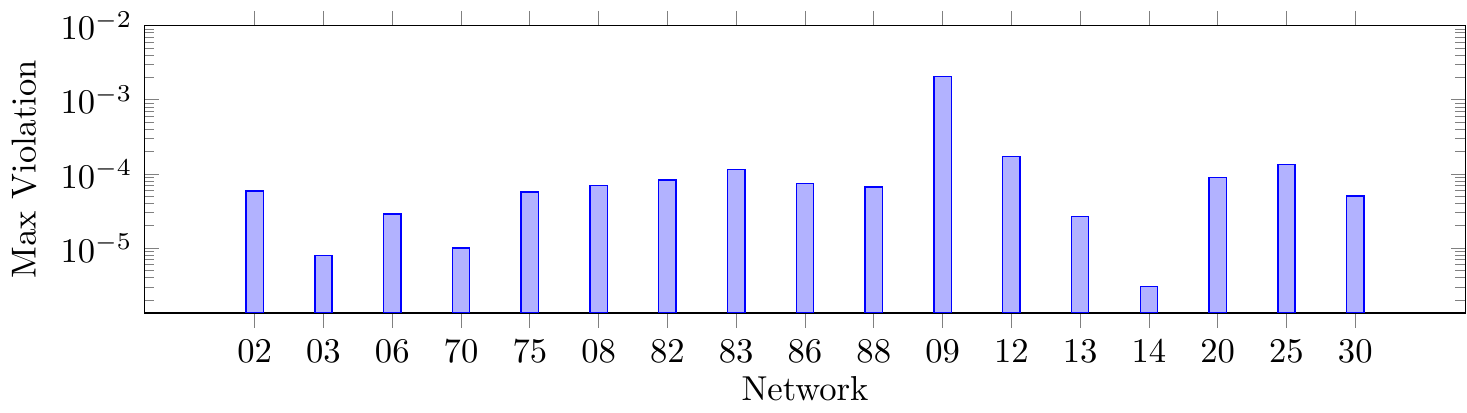}
     	\centering
     	\caption{Maximum violation of the consensus constraints between the base case and contingency solutions upon termination. The vertical axis shows the geometric mean of max constraint violation over scenarios of each network model.}
     	\label{fig: admm res}
\end{figure}

We first note that all cases are able to produce the base case solution before the Phase I time limit, which is highly desirable from a practical point view. For several networks of small or medium sizes, the two-level ADMM is able to perform up to eight rounds of updates of the base solution, which iteratively incorporates contingency effects into the base case solution through the augmented Lagrangian framework embedded in subproblem \eqref{eq:distributed_admm_first}. For most scenarios of large networks with more than 10k buses, the two-level ADMM is able to bring the max violation down to the order of $10^{-4}\sim10^{-6}$ in even 1 or 2 inner iterations. Such a behavior indicates that the proposed algorithm is able to locate feasible solutions in only a few iterations, and allows us to include a large number of contingencies in Phase I.


\subsection{Computational Efficiency and Contingency Ranking in the Recourse Model}
    
  
To further demonstrate the scalability of Algorithm \ref{alg:recourse}, Fig. \ref{fig: ctg_wall_time} depicts the average computation time needed to solve the recourse model.
As can be seen, the computation time is less than one minute for small up to medium-size networks with $10$k buses, and increases to around $250$ seconds for network $25$ which has around $24$k buses. Note that given $144$ available computation cores, the parallel computation time of each contingency is within $2$ seconds for Phase II (see Section \ref{subsec: Evaluation Methods}), as roughly speaking we have (250 sec$/$contingency)$\div$(144 cores)< 2 sec$/$(contingency$\cdot$core). 
%
%
    \begin{figure}[h!]
     	\includegraphics[height=4cm, keepaspectratio=true]{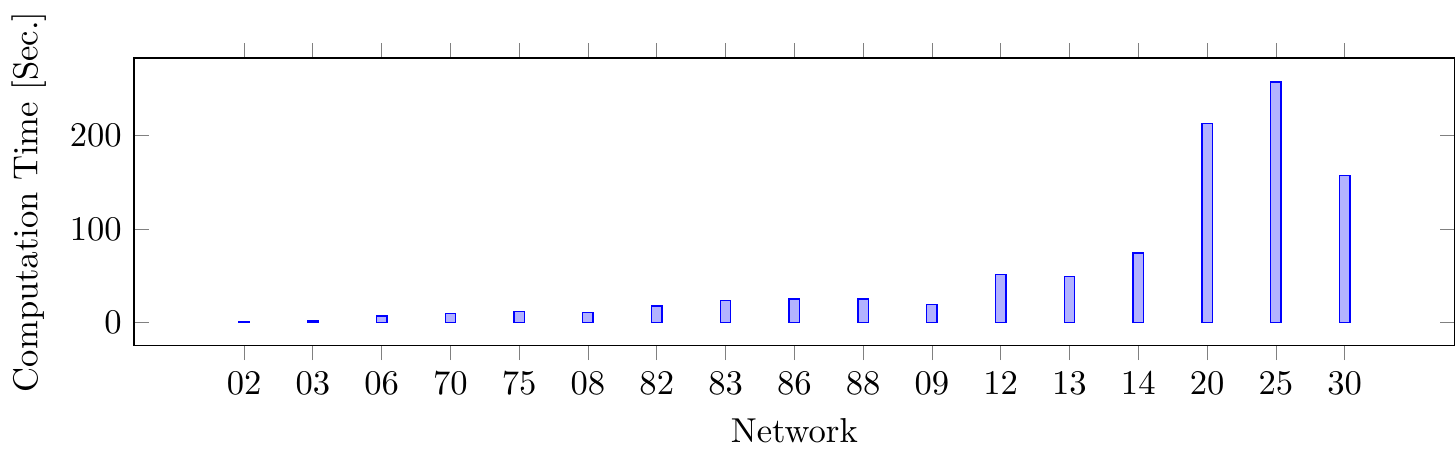}
     	\centering
     	\caption{Average wall-clock computation time for solving the recourse model. The vertical axis shows the geometric mean of the average computation time associated with all $20$ instances of each network model.}
     	\label{fig: ctg_wall_time}
     \end{figure}

Another interesting observation is on the efficacy of the ranking algorithm (Algorithm \ref{alg:ctg_ranking}) in identifying the most severe contingencies, which is shown in Fig. \ref{fig: ctg analysis}. Here the vertical axis is the improved penalty of each contingency $k$ in the sorted list $\calK$, i.e., $\tilde{c}_{{k}}^\sigma-{c}_{{k}}^\sigma$, where $\tilde{c}_{{k}}^\sigma$ is
the estimated penalty from Algorithm \ref{alg:ctg_ranking} 
and ${c}_{{k}}^\sigma$ is the objective value of the recourse model \eqref{eq:RecourseModel} returned by Algorithm \ref{alg:recourse}. This quantity shows how much penalty can be avoided if the system operator solves the recourse model \eqref{eq:RecourseModel} and takes corrective actions, instead of taking the prespecified base case solution as a preventive measure and therefore not responding to the contingency occurrence. As can be seen in Fig. \ref{fig: ctg analysis}, for the tail contingencies in the sorted list $\calK$, the system operator can barely gain benefits from solving the recourse model, so time and resources can be saved for the more important contingencies.
%
%


\begin{figure}[htb!]
\centering
\subfloat[network $02$]{%
\includegraphics[width=8cm]{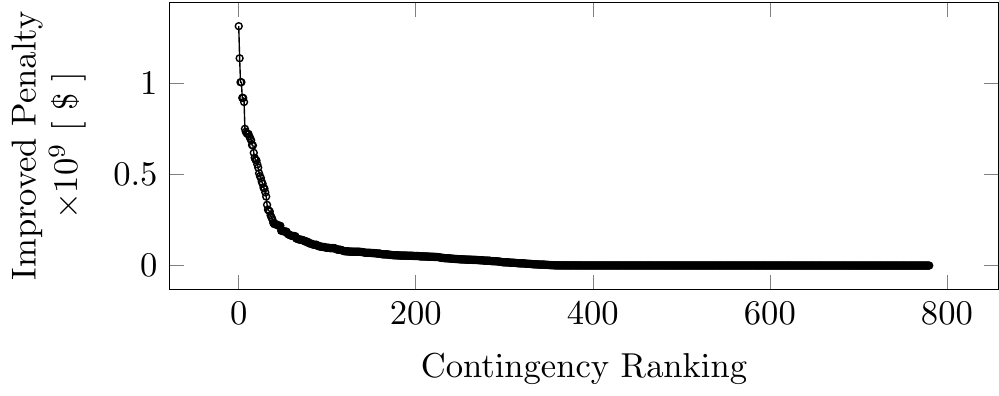}
}%
\hfill
\subfloat[network $06$]{%
\includegraphics[width=8cm]{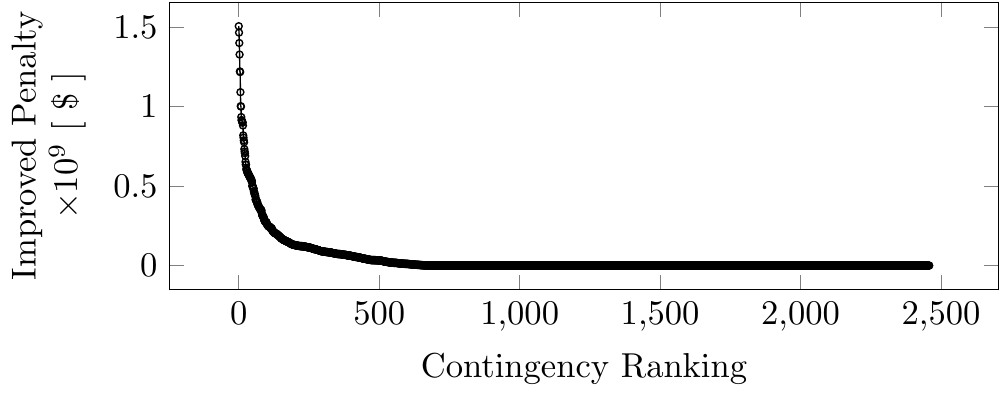}%
}%
\hfill
\subfloat[network $09$]{%
	\includegraphics[width=8cm]{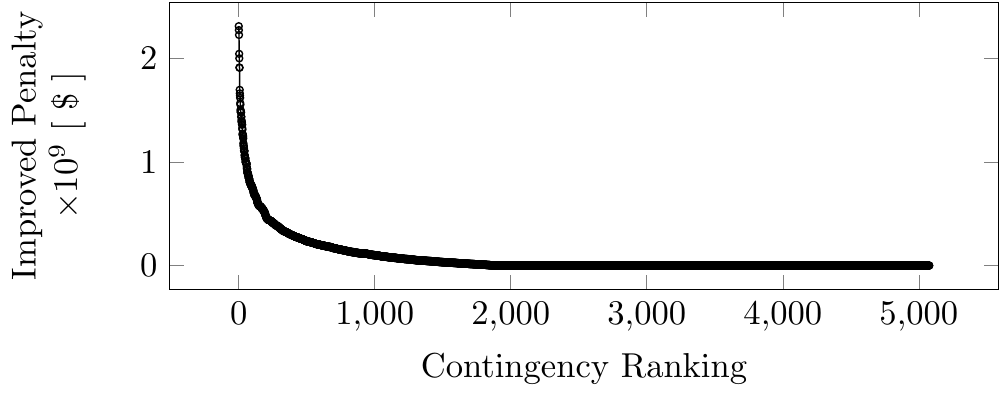}%
}%
\hfill
\subfloat[network $25$]{%
	\includegraphics[width=8cm]{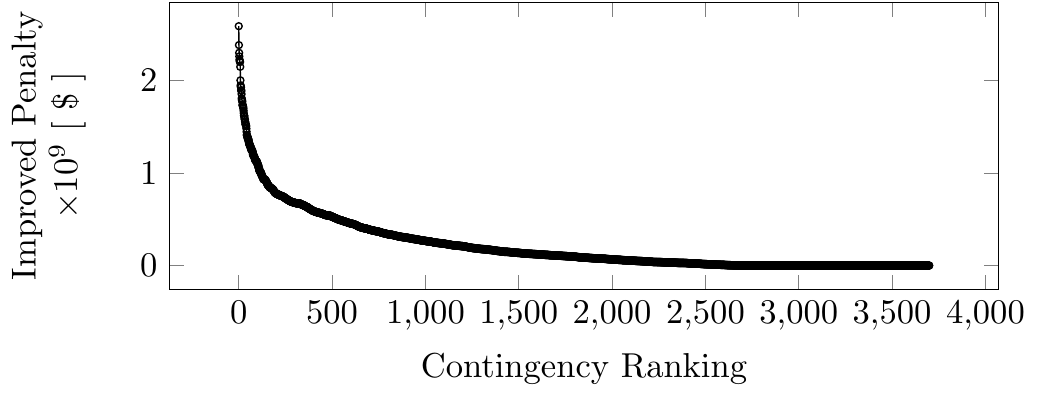}%
}%
\hfill
\caption{Improved penalty of ranked contingencies for different instances of networks $02$, $06$, $09$, and $25$.}
\label{fig: ctg analysis}
\end{figure}

%
%

\section{Conclusion}\label{sec:conclusion}
In this paper, we propose a novel and comprehensive algorithmic framework for solving large-scale SC-ACOPF problems under time and resource constraints. We propose smoothing techniques for handling disjunctive constraints that model the coupling of active and reactive power and voltages in the base case and post-contingency states. 
We introduce a two-level ADMM algorithm for solving the smoothed two-stage formulation of SC-ACOPF, whose iteration complexity can be reasonably quantified under certain technical conditions.
Another important component is the contingency screening algorithm that effectively predicts the cost of contingencies and provides the two-level ADMM with a subset of highly risky contingencies. Parallel computation is fully exploited. Extensive testing on the ARPA-E test cases demonstrates that the proposed algorithms can produce high quality solutions in a very restricted time framework for real-world sized power grids up to 30k buses and 22k contingencies.

\emph{Acknowledgment.}
The authors acknowledge the continued support of ARPA-E under award number DE-AR0001089. We would also like to sincerely thank Dr. Santanu S. Dey for the many inspiring discussions during the ARPA-E GO competition that greatly enriched our understanding of the challenging nature of the SC-ACOPF problem.

\bibliographystyle{plainnat} 
\bibliography{ref.bib} 

\clearpage
\appendix

\section{Proofs}

\subsection{Proof of Proposition~\ref{prop:SmoothedRealDisjunction}}\label{sec:Proof:SmoothedRealDisjunction}
\begin{repeattheorem}[Proposition~\ref{prop:SmoothedRealDisjunction}]
    For each \(g\in G_k\) and \(k\in\calK\), as $\epsilon\to 0$, the approximation errors converge to zero, i.e.,
    \begin{align*}
        \sup\left\{\vert{p}_{gk}^\epsilon(\Delta_k)- p_{gk}(\Delta_k)\vert:\Delta_k\in\R\right\}\to 0, \;\; \sup\left\{\vert\tilde{p}_{gk}^{\epsilon}(\Delta_k)- \tilde{p}_{gk}(\Delta_k)\vert:\Delta_k\ge (\underline{p}_g-p_{g0})/\alpha_g\right\}\to 0.
    \end{align*}
\end{repeattheorem}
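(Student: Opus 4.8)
The plan is to reduce both claims to the single uniform error bound~\eqref{eq:SmoothedApproximation} for the basic softplus approximation $F^\epsilon(x)=\epsilon\ln(1+\exp(x/\epsilon))$ of $F(x)=\max\{0,x\}$, which yields $|F^\epsilon(x)-F(x)|\le\epsilon\ln2$ for all $x\in\R$, combined with the elementary fact that $F$ is $1$-Lipschitz. I would dispatch the second (simpler) claim first. Rewriting $\tilde{p}_{gk}(\Delta_k)=\min\{\bar{p}_g,p_{g0}+\alpha_g\Delta_k\}=\bar{p}_g-F(\bar{p}_g-p_{g0}-\alpha_g\Delta_k)$, one sees that $\tilde{p}_{gk}^{\epsilon}(\Delta_k)=\bar{p}_g-F^\epsilon(\bar{p}_g-p_{g0}-\alpha_g\Delta_k)$ is exactly the same expression with $F$ replaced by $F^\epsilon$. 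Setting $x:=\bar{p}_g-p_{g0}-\alpha_g\Delta_k$, the pointwise error is $|\tilde{p}_{gk}^{\epsilon}(\Delta_k)-\tilde{p}_{gk}(\Delta_k)|=|F^\epsilon(x)-F(x)|\le\epsilon\ln2$, uniformly in $\Delta_k$, so the supremum is at most $\epsilon\ln2\to0$.

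For the first claim I would express the projection onto $[\underline{p}_g,\bar{p}_g]$ as a nested composition of $F$. Writing $y:=p_{g0}+\alpha_g\Delta_k$, and using $\min\{\bar{p}_g,y\}=\bar{p}_g-F(\bar{p}_g-y)$ together with $\max\{\underline{p}_g,t\}=\underline{p}_g+F(t-\underline{p}_g)$, the clip takes the form $p_{gk}(\Delta_k)=\underline{p}_g+F(\bar{p}_g-F(\bar{p}_g-y)-\underline{p}_g)$. The crux is to verify algebraically that replacing both occurrences of $F$ by $F^\epsilon$ reproduces precisely the stated approximation~\eqref{eq:SmoothedRealPowerDisjunction}: the nested log--exp expression collapses via $\exp\!\big(a-\ln(1+e^{b})\big)=e^{a}/(1+e^{b})$ into the ratio form $\underline{p}_g+\epsilon\ln[1+\exp[(\bar{p}_g-\underline{p}_g)/\epsilon]/(1+\exp[(\bar{p}_g-y)/\epsilon])]$, confirming that $p_{gk}^\epsilon$ is exactly the double composition of $F^\epsilon$.

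With that identity in hand, the error bound follows by propagating~\eqref{eq:SmoothedApproximation} through the two layers. Denoting the inner arguments by $w:=\bar{p}_g-F(\bar{p}_g-y)-\underline{p}_g$ and $w^\epsilon:=\bar{p}_g-F^\epsilon(\bar{p}_g-y)-\underline{p}_g$, a triangle-inequality split gives $|F^\epsilon(w^\epsilon)-F(w)|\le|F^\epsilon(w^\epsilon)-F(w^\epsilon)|+|F(w^\epsilon)-F(w)|$; the first term is at most $\epsilon\ln2$ by the uniform bound, and the second is at most $|w^\epsilon-w|=|F^\epsilon(\bar{p}_g-y)-F(\bar{p}_g-y)|\le\epsilon\ln2$ by $1$-Lipschitzness of $F$. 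Hence $|p_{gk}^\epsilon(\Delta_k)-p_{gk}(\Delta_k)|\le2\epsilon\ln2$ uniformly in $\Delta_k\in\R$, and the supremum vanishes as $\epsilon\to0$. The only real obstacle is the bookkeeping of the nested-composition identity; once both response functions are exhibited as the same composition with $F$ versus $F^\epsilon$, the uniform convergence is immediate from the single-layer estimate and Lipschitz stability.
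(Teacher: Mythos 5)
Your proof is correct and follows essentially the same route as the paper: both express the clip as the nested composition $p_{gk}=\underline{p}_g+\max\{0,\tilde{p}_{gk}-\underline{p}_g\}$ (and $p_{gk}^{\epsilon}$ as the corresponding composition of $F^{\epsilon}$), dispose of the second claim by the single-layer bound $\epsilon\ln 2$, and split the first claim by the triangle inequality to obtain the uniform bound $2\epsilon\ln 2$. The only cosmetic difference is that you propagate the inner-layer error via the $1$-Lipschitzness of $F$, whereas the paper bounds the same term by a direct logarithmic estimate; both yield the identical constant.
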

\begin{proof}{Proof of Proposition~\ref{prop:SmoothedRealDisjunction}.}
    It follows from the definition and the uniform approximation bound \eqref{eq:SmoothedApproximation} that
    \[
        \tilde{p}_{gk}^{\epsilon}(\Delta_k)-\epsilon\ln{2}\le \tilde{p}_{gk}(\Delta_k)\le \tilde{p}_{gk}^{\epsilon}(\Delta_k),\quad\forall\,\Delta_k\in\R.
    \]
    Thus we have \(\sup\{\vert\tilde{p}_{gk}^{\epsilon}(\Delta_k)- p_{gk}^{\epsilon}(\Delta_k)\vert:\Delta_k\ge (\underline{p}_g-p_{g0})/\alpha_g\}\le \epsilon\ln{2}\to 0\) as \(\epsilon\to 0\).
    Moreover, note that \(p_{gk}(\Delta_k)=\underline{p}_g+\max\{0,\tilde{p}_{gk}(\Delta_k)-\underline{p}_g\}\) and \(p^{\epsilon}_{gk}(\Delta_k)=\underline{p}_g+\epsilon\ln(1+\exp((\tilde{p}_{gk}^{\epsilon}(\Delta_k)-\underline{p}_g)/\epsilon))\).
    So the approximation error can be bounded by
    \begin{align*}
        \vert p_{gk}(\Delta_k)-p^{\epsilon}_{gk}(\Delta_k)\vert &\le
        \vert p_{gk}(\Delta_k)-\underline{p}_g-\epsilon\ln(1+\exp((\tilde{p}_{gk}(\Delta_k)-\underline{p}_g)/\epsilon))\vert \\
        &+ \vert \tilde{p}_{gk}(\Delta_k)-\underline{p}_g+\epsilon\ln(1+\exp((\tilde{p}_{gk}(\Delta_k)-\underline{p}_g)/\epsilon))\vert\\
        &\le\epsilon\ln\left(\frac{1+2\exp(\tilde{p}_{gk}(\Delta_k))}{1+\exp(\tilde{p}_{gk}(\Delta_k))}\right)+\epsilon\ln{2}\le 2\epsilon\ln{2}\to 0,\quad\text{as }\epsilon\to 0.
    \end{align*}
    \qed
\end{proof}

\subsection{Proof of Proposition~\ref{prop:SmoothedReactiveDisjunction}}\label{sec:Proof:SmoothedReactiveDisjunction}
\begin{repeattheorem}[Proposition~\ref{prop:SmoothedReactiveDisjunction}]
    For each \(g\in G_k\) and \(k\in\calK\), given any \(\epsilon>0\), we have \(S_{gk}\subseteq {S}_{gk}^{\epsilon}\).
    Moreover, the distance \(\sup_{(q,v)\in {S}_{gk}^{\epsilon}}\inf_{(q',v')\in S_{gk}}\Vert (q,v)-(q',v')\Vert\to 0\) as \(\epsilon\to 0\).
\end{repeattheorem}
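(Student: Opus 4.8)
The plan is to reduce both claims to the scalar smoothing bound \eqref{eq:SmoothedApproximation} by recognizing that the smoothed constraints \eqref{eq:SmoothedApproximation:ApproxConstr2}--\eqref{eq:SmoothedApproximation:ApproxConstr3} are exactly the smoothings of the exact constraints \eqref{eq:SmoothedApproximation:SetConstr2}--\eqref{eq:SmoothedApproximation:SetConstr3}. Writing $a:=q_{gk}-\underline{q}_g$ and $b:=v_{i_g}^+$ and recalling $F(x)=\max\{0,x\}$, the elementary identity $b-F(b-a)=\min\{a,b\}$ shows that \eqref{eq:SmoothedApproximation:SetConstr2} reads $b-F(b-a)\le 0$, while \eqref{eq:SmoothedApproximation:ApproxConstr2} reads precisely $b-F^\epsilon(b-a)\le\epsilon\ln 2$, since the argument of the logarithm there equals $\exp((b-a)/\epsilon)$. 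The analogous identification holds for \eqref{eq:SmoothedApproximation:SetConstr3} and \eqref{eq:SmoothedApproximation:ApproxConstr3} with $a':=\bar{q}_g-q_{gk}$ and $b':=v_{i_g}^-$, while \eqref{eq:SmoothedApproximation:SetConstr1} and \eqref{eq:SmoothedApproximation:SetConstr4} are carried over verbatim to \eqref{eq:SmoothedApproximation:ApproxConstr1} and \eqref{eq:SmoothedApproximation:ApproxConstr4}.

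For the inclusion $S_{gk}\subseteq S_{gk}^{\epsilon}$, I would take $(q,v)\in S_{gk}$ together with its witnesses $v_{i_g}^+,v_{i_g}^-$ and keep the same witnesses. Using $F\le F^\epsilon$ from \eqref{eq:SmoothedApproximation} and the exact constraint $b-F(b-a)\le 0$, I obtain $b-F^\epsilon(b-a)\le b-F(b-a)\le 0\le\epsilon\ln 2$, which is \eqref{eq:SmoothedApproximation:ApproxConstr2}; the reactive-side constraint \eqref{eq:SmoothedApproximation:ApproxConstr3} follows identically, and \eqref{eq:SmoothedApproximation:ApproxConstr1}, \eqref{eq:SmoothedApproximation:ApproxConstr4} hold since they are unchanged. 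This yields the inclusion for every $\epsilon>0$.

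For the distance estimate, fix $(q,v)\in S_{gk}^{\epsilon}$ with witnesses $v_{i_g}^+,v_{i_g}^-$ and set $\delta:=2\epsilon\ln 2$. Applying $F^\epsilon\le F+\epsilon\ln 2$ from \eqref{eq:SmoothedApproximation} to \eqref{eq:SmoothedApproximation:ApproxConstr2} gives $\min\{q_{gk}-\underline{q}_g,\,v_{i_g}^+\}=b-F(b-a)\le b-F^\epsilon(b-a)+\epsilon\ln 2\le\delta$, and symmetrically $\min\{\bar{q}_g-q_{gk},\,v_{i_g}^-\}\le\delta$ from \eqref{eq:SmoothedApproximation:ApproxConstr3}. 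I then construct a nearby point $(q',v')\in S_{gk}$ by a short case split governed by $\max\{v_{i_g}^+,v_{i_g}^-\}$. If $\max\{v_{i_g}^+,v_{i_g}^-\}\le\delta$, then $|v_{i_g k}-v_{i_g 0}|=|v_{i_g}^+-v_{i_g}^-|\le\delta$, so I project onto the first piece by keeping $q'=q_{gk}\in[\underline{q}_g,\bar{q}_g]$ and setting $v'_{i_g k}=v_{i_g 0}$. Otherwise the larger witness exceeds $\delta$; since $v_{i_g k}-v_{i_g 0}=v_{i_g}^+-v_{i_g}^-$ shares the sign of $v_{i_g}^+-v_{i_g}^-$, the larger witness is the one matching that sign, and the corresponding min-inequality forces $q_{gk}$ to within $\delta$ of the appropriate bound ($\underline{q}_g$ when $v_{i_g k}\ge v_{i_g 0}$, else $\bar{q}_g$); I then snap $q'$ to that bound and keep $v'_{i_g k}=v_{i_g k}$, landing on the second or third piece. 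In every branch only one coordinate moves, by at most $\delta$, and the bounds \eqref{eq:SmoothedApproximation:ApproxConstr4} ensure the projected point still respects $[\underline{q}_g,\bar{q}_g]$ and $[\underline{v}_{i_g},\bar{v}_{i_g}]$, so $\Vert(q,v)-(q',v')\Vert\le\delta=2\epsilon\ln 2$. As this bound is uniform over $S_{gk}^{\epsilon}$, the supremum of the inner infimum is at most $2\epsilon\ln 2\to 0$.

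The main point requiring care is the case analysis in the last paragraph: one must verify that the split is exhaustive and that in the degenerate corner where both witnesses exceed $\delta$ (which forces $\bar{q}_g-\underline{q}_g\le 2\delta$) the chosen boundary value of $q$ is indeed within $\delta$ of $q_{gk}$. Everything else is a direct consequence of the scalar bound \eqref{eq:SmoothedApproximation}, so no genuine analytic difficulty arises beyond this enumeration.
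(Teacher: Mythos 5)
Your proof is correct. The inclusion \(S_{gk}\subseteq S_{gk}^{\epsilon}\) is argued exactly as in the paper (apply \(F\le F^{\epsilon}\) from \eqref{eq:SmoothedApproximation} to the reformulated min-constraints), but your treatment of the distance claim takes a genuinely different route. The paper writes \(S_{gk}\) and \(S_{gk}^{\epsilon}\) as projections of the compact sets \(T_{gk}\cap\{G_{\pm}(x)\le0\}\) and \(T_{gk}\cap\{G^{\epsilon}_{\pm}(x)\le0\}\) and then runs a sequential-compactness argument: any limit point of points feasible for the \(\epsilon=1/j\) relaxations must satisfy the exact constraints, by uniform convergence of \(G^{1/j}_{\pm}\) to \(G_{\pm}\) on \(T_{gk}\) and a contradiction. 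That argument is shorter but purely qualitative --- it gives no rate. You instead construct an explicit nearest point in \(S_{gk}\) by deciding which disjunct to project onto, and obtain the uniform bound \(2\epsilon\ln 2\) on the sup-inf distance. This buys an explicit linear rate in \(\epsilon\), which is directly relevant to the accuracy-versus-conditioning trade-off the paper discusses when choosing \(\epsilon\); the price is the case enumeration, which you correctly flag and which does close (in the corner where both witnesses exceed \(2\epsilon\ln2\), both min-inequalities pin \(q_{gk}\) to within \(2\epsilon\ln2\) of \emph{both} bounds, so snapping to the bound dictated by the sign of \(v_{i_g}^{+}-v_{i_g}^{-}\) still moves \(q\) by at most \(2\epsilon\ln2\); and when \(v_{i_g}^{+}=v_{i_g}^{-}\) the point already lies in the first disjunct). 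One cosmetic remark: your construction also implicitly uses that \(v_{i_g0}\in[\underline{v}_{i_g},\bar{v}_{i_g}]\) when projecting onto the first disjunct, which is guaranteed by the base-case voltage bounds.
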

\begin{proof}{Proof of Proposition~\ref{prop:SmoothedReactiveDisjunction}.}
    The containment \(S_{gk}\subseteq S_{gk}^{\epsilon}\) follows directly from applying the uniform approximation bound~\eqref{eq:SmoothedApproximation} to constraints~\eqref{eq:SmoothedApproximation:SetConstr2} and~\eqref{eq:SmoothedApproximation:SetConstr3}.
    For notational convenience, let \(x:=(q_g,v_{i_g},v_{i_g}^+,v_{i_g}^-)\) denote the variables,
    \[
    T_{gk}:=\{(q,v,v^+,v^-):v=v_{i_g0}+v^+-v^-,\,\underline{q}_g\le q_{gk}\le \bar{q}_g,\,\underline{v}_{i_g}\le v_{i_g k}\le \bar{v}_{i_g},\,0\le v_{i_g}^+,v_{i_g}^-\le \bar{v}_{i_g}-\underline{v}_{i_g}\}
    \] 
    that is, the compact set defined by the constraints~\eqref{eq:SmoothedApproximation:SetConstr1} and~\eqref{eq:SmoothedApproximation:SetConstr4}, and 
    \begin{align*}
        G_+(x)&:=\min\{q_{gk}-\underline{q}_g,v_{i_g}^+\},\\
        G^\epsilon_+(x)&:=v_{i_g}^+-\epsilon\ln\left[1+\exp\left(\frac{v_{i_g}^+-q_{gk}+\underline{q}_{g}}{\epsilon}\right)\right]-\epsilon\ln2,\\
        G_-(x)&:=\min\{-q_{gk}+\bar{q}_g,v_{i_g}^-\},\\
        G^\epsilon_-(x)&:=v_{i_g}^--\epsilon\ln\left[1+\exp\left(\frac{v_{i_g}^-+q_{gk}-\bar{q}_{g}}{\epsilon}\right)\right]-\epsilon\ln2,
    \end{align*}
    which are the functions corresponding to constraints~\eqref{eq:SmoothedApproximation:SetConstr2}, \eqref{eq:SmoothedApproximation:ApproxConstr2}, \eqref{eq:SmoothedApproximation:SetConstr3}, and \eqref{eq:SmoothedApproximation:ApproxConstr3}, respectively.
    Note that \(S_{gk}\) and \(S_{gk}^{\epsilon}\) are the projections of the compact sets \(T_{gk}\cap\{G_+(x)\le0,\,G_-(x)\le 0\}\) and \(T_{gk}\cap\{G^\epsilon_+(x)\le0,\,G^\epsilon_-(x)\le 0\}\).
    So it suffices to show that any limit point of \(T_{gk}\cap\{G^\epsilon_+(x)\le0,\,G^\epsilon_-(x)\le 0\}\) actually lies in \(T_{gk}\cap\{G_+(x)\le0,\,G_-(x)\le 0\}\) when \(\epsilon\) goes to zero.
    
    Take any sequence \(\{x_{(j)}\}_{j=1}^\infty\) such that \(G_+^{1/j}(x_{(j)})\le0\) and \(G_-^{1/j}(x_{(j)})\le0\) for all \(j\).
    Replacing \(\{x_{(j)}\}_{j=1}^\infty\) with its subsequence if necessary, we may assume that \(x_{(j)}\to x_0\) for some \(x_0\in T_{gk}\) due to the compactness.
    We claim that \(G_+(x_0)\le0\) and \(G_-(x_0)\le 0\) hold.
    Otherwise suppose \(\delta:=\max\{G_+(x_0),G_-(x_0)\}>0\).
    We can find \(j>0\) such that \(\Vert x_{(j)}-x_0\Vert< \delta/4\), \(\sup_{x\in T_{gk}}\vert G_+(x)-G_+^{1/j}(x)\vert< \delta/2\), and \(\sup_{x\in T_{gk}}\vert G_-(x)-G_-^{1/j}(x)\vert< \delta/2\).
    Thus we have \(G_+(x_0)\le\vert G_+(x_0)-G_+(x_{(j)})\vert + \vert G_+(x_{(j)})-G_+^{1/j}(x_{(j)})\vert< 2\cdot(\delta/4)+\delta/2=\delta\) and the same holds for \(G_-(x_0)\), which leads to a contradiction.
    Therefore, we see that \(x_0\in T_{gk}\cap\{G_+(x)\le0,\,G_-(x)\le 0\}\) and this completes the proof. \qed
\end{proof}

\subsection{Proof of Theorem \ref{thm:admm_iter_complex}}\label{sec:proof_of_admm}
\begin{repeattheorem}[Theorem \ref{thm:admm_iter_complex}]
	Let $\lmd=\{\lmd_k\}_{k\in \cal{K}}$ and $\beta>0$ be given, and $\epsilon >0$. Suppose ADMM generates  iterates $\{\x^t_0,\{\x^t_k, (\x^{\text{base}}_0)^{t}\}_{k\in \cal{K}}, \{\z^t_k\}_{k\in \cal{K}} \}_{t\in \N}$ that satisfy the following conditions:
	\begin{itemize}
		\item[-] (descent in base case update) $\x_0^{t+1}$ is a stationary point of the optimization problem in \eqref{eq:distributed_admm_first} satisfying 
		\begin{align}
			& f_0(\x^{t+1}_0) + \sum_{k\in \mathcal{K}} \left(\langle \y_k^t, \x^{t+1}_0 \rangle + \frac{\rho}{2}\| \x^{t+1}_0 - (\x^{\text{base}}_k)^t + {\z}^t_k\|^2\right) \notag \\
			\leq &  f_0(\x^{t}_0) + \sum_{k\in \mathcal{K}} \left(\langle \y_k^t, \x^{t}_0 \rangle + \frac{\rho}{2}\| \x^{t}_0 - (\x^{\text{base}}_k)^t + {\z}^t_k\|^2\right);\label{eq:descent_assumption_base}
		\end{align} 
		\item[-] (descent in contingenices update) there exists a positive constant $\gamma >0$ such that for all $k \in \mathcal{K}$, $\x^{t+1}_k$ is a stationary point of the optimization problem in \eqref{eq:distributed_admm_second} satisfying
		\begin{align}
			& f_k(\x^{t+1}_k) - \langle \y_k^t, (\x^{\text{base}}_k)^{t+1} \rangle + \frac{\rho}{2}\| \x^{t+1}_0 - (\x^{\text{base}}_k)^{t+1} + {\z}^t_k\|^2 \notag \\
			\leq & f_k(\x^{t}_k) - \langle \y_k^t, (\x^{\text{base}}_k)^{t} \rangle + \frac{\rho}{2}\| \x^{t+1}_0 - (\x^{\text{base}}_k)^{t} + {\z}^t_k\|^2 - \gamma\beta \|(\x^{\text{base}}_k)^{t+1}-(\x^{\text{base}}_k)^{t}\|^2. \label{eq:descent_assumption_ctg}
		\end{align}
	\end{itemize}
	Then ADMM finds an $\epsilon$-stationary solution of the ALR problem \eqref{eq:three_block_relaxation} in the sense of definition \ref{def:alr_approx_solution} in at most
	\begin{align}\label{eq:admm_iter_complex}
		T\leq \left\lceil  \frac{ 2\rho^2 |\cal{K}| (\overline{L}(\lmd,\beta)-\underline{L}(\lmd,\beta))}{\epsilon^2 \min \{\gamma\beta, (\beta+\rho)/2-\beta^2/\rho \} } \right\rceil	
	\end{align}
	iterations, where
	\begin{align}
		\overline{L}(\lmd,\beta) := & f_0(\x^0_0) + \sum_{k \in \mathcal{K}} \left(f_k(\x^0_k)	+\langle \lmd_k, {\z}^0_k \rangle + \frac{\beta}{2}\|{\z}^0_k\|^2 \right) \notag \\
	& + \sum_{k \in \mathcal{K}} \left(\langle \y^0_k, \x^0_0 - (\x^{\text{base}}_k)^0+ {\z}^0_k \rangle + \frac{\rho}{2}\|\x^0_0 - (\x^{\text{base}}_k)^0+ {\z}^0_k\|^2 \right), \\
		\underline{L}(\lmd,\beta):= & \min \left\{\sum_{g\in \cal{G}} c_g(p_{g0}):\ p_{g0}\in [\underline{p}_g, \overline{p}_g], \ \forall g\in \cal{G} \right\} - \frac{\|\lmd\|^2}{\beta}.
	\end{align}
\end{repeattheorem}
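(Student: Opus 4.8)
The plan is to run the standard Lyapunov-descent template for nonconvex ADMM, taking the augmented Lagrangian $L_\rho$ of \eqref{eq:al_function} as the potential function and writing $L_\rho^t$ for its value at iteration $t$. The three ingredients are: (i) sufficient decrease of $L_\rho$ over one Gauss--Seidel sweep, (ii) a uniform lower bound $L_\rho^t\ge\underline{L}(\lmd,\beta)$, and (iii) a translation of the resulting summable successive-difference estimate into the residuals of Definition~\ref{def:alr_approx_solution}. The decisive first move is to exploit the slack block to tame the dual ascent. Combining the closed-form update \eqref{eq:distributed_admm_third} with the dual update \eqref{eq:centralized_admm_dual} and the initialization $\lmd_k+\beta\z_k^0+\y_k^0=0$, I would prove by induction the invariant $\y_k^{t+1}=-\lmd_k-\beta\z_k^{t+1}$, hence $\y_k^{t+1}-\y_k^t=-\beta(\z_k^{t+1}-\z_k^t)$. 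This simultaneously verifies the dual-feasibility condition $\lmd_k+\beta\z_k+\y_k=0$ of Definition~\ref{def:alr_approx_solution} \emph{exactly} at every iterate, and bounds the Lagrangian increase from the dual step, $\tfrac1\rho\sum_k\|\y_k^{t+1}-\y_k^t\|^2=\tfrac{\beta^2}{\rho}\sum_k\|\z_k^{t+1}-\z_k^t\|^2$, by a multiple of the $\z$-movement.

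For the decrease I would account block by block over the sweep $t\to t+1$: the base-case and contingency updates contribute nonpositive changes by assumptions \eqref{eq:descent_assumption_base}--\eqref{eq:descent_assumption_ctg} (the latter supplying the surplus $-\gamma\beta\|(\x^{\text{base}}_k)^{t+1}-(\x^{\text{base}}_k)^{t}\|^2$); the $\z$-update decreases $L_\rho$ by at least $\tfrac{\beta+\rho}{2}\|\z_k^{t+1}-\z_k^t\|^2$ since the $\z_k$-subproblem is $(\beta+\rho)$-strongly convex and $\z_k^{t+1}$ is its exact minimizer; and the dual step raises $L_\rho$ by the quantity computed above. Summing the four contributions yields
\[
L_\rho^{t+1}-L_\rho^{t}\le -\theta\sum_{k\in\mathcal{K}}\left(\|(\x^{\text{base}}_k)^{t+1}-(\x^{\text{base}}_k)^{t}\|^2+\|\z_k^{t+1}-\z_k^{t}\|^2\right),\qquad \theta:=\min\left\{\gamma\beta,\tfrac{\beta+\rho}{2}-\tfrac{\beta^2}{\rho}\right\},
\]
where the choice $\rho=\tau\beta$ with $\tau>1$ guarantees $\theta>0$.

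For the lower bound I would drop the nonnegative penalty terms $f_k\ge0$ and $\delta c_0^\sigma\ge0$, bound $f_0$ below by the box-constrained generation cost $\min\{\sum_g c_g(p_{g0}):p_{g0}\in[\underline{p}_g,\bar p_g]\}$ (valid since $\x_0\in X_0$), and complete squares in the $\z_k$ and residual terms, again using $\y_k=-\lmd_k-\beta\z_k$, to absorb the cross terms into $-\|\lmd\|^2/\beta$, giving $L_\rho^t\ge\underline{L}(\lmd,\beta)$. Telescoping the descent inequality then produces $\theta\sum_{t=0}^{T-1}a_t\le L_\rho^0-L_\rho^T\le\overline{L}(\lmd,\beta)-\underline{L}(\lmd,\beta)$, where $a_t$ denotes the bracketed sum, so the best iterate $t^\star$ obeys $a_{t^\star}\le(\overline{L}-\underline{L})/(\theta T)$.

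To finish, I would read off the optimality conditions of subproblems \eqref{eq:distributed_admm_first}--\eqref{eq:distributed_admm_second} and substitute the dual update to write the residuals purely in successive differences: $\d_0=\rho\sum_k[(\z_k^{t+1}-\z_k^{t})-((\x^{\text{base}}_k)^{t+1}-(\x^{\text{base}}_k)^{t})]$, $\d_k=(\bm 0,\rho(\z_k^{t}-\z_k^{t+1}))$, and $\s_k=\tfrac1\rho(\y_k^{t+1}-\y_k^t)$. Each is controlled by $\sqrt{a_{t^\star}}$; applying Cauchy--Schwarz to the $|\mathcal{K}|$-term sum defining $\d_0$ yields $\|\d_0\|\le\rho\sqrt{2|\mathcal{K}|}\sqrt{a_{t^\star}}$, which dominates and produces the factor $2\rho^2|\mathcal{K}|$, so requiring this to be at most $\epsilon$ gives exactly the stated bound on $T$. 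The main obstacle, and what makes the argument nonstandard, is the dual ascent: unlike convex ADMM the dual step \emph{increases} $L_\rho$ and without structure can destroy monotonicity. The entire resolution is the invariant $\y_k^{t+1}-\y_k^t=-\beta(\z_k^{t+1}-\z_k^t)$ afforded by the added slack block, which converts the dual increase into a controllable multiple of $\z$-movement; the only price is the parameter condition $\tfrac{\beta+\rho}{2}>\tfrac{\beta^2}{\rho}$ securing $\theta>0$, and everything else is bookkeeping.
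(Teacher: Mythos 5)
Your proposal is correct and follows essentially the same route as the paper's proof: the invariant $\y_k^{t+1}=-\lmd_k-\beta\z_k^{t+1}$ (which the paper obtains directly from the optimality condition of the $\z$-update combined with the dual step), the block-by-block descent accounting yielding the constant $\min\{\gamma\beta,(\beta+\rho)/2-\beta^2/\rho\}$, the lower bound $\underline{L}(\lmd,\beta)$ via completing the square, and the final Cauchy--Schwarz step producing the $2\rho^2|\mathcal{K}|$ factor all match the paper's four-step argument. No gaps.
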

\proof{Proof of Theorem \ref{thm:admm_iter_complex}.} We prove the theorem by steps.
	\begin{enumerate}
	\item{Investigate optimality conditions of subproblem updates.} \\
	The optimality conditions of updates in \eqref{eq:distributed_admm_first}-\eqref{eq:distributed_admm_third} are given as 
	\begin{subequations}
	\begin{align}
	\d^{t+1}_0:= &  \rho \sum_{k\in \cal{K}} \left( ({\x}^{\text{base}}_k)^{t} - ({\x}^{\text{base}}_k)^{t+1}\right) + \left(\z^{t+1}_k-\z^t_k\right) \in  \partial f({\x}^{t+1}_0) + \sum_{k\in \cal{K}} \y^{t+1}_{k} + \mathcal{N}_{X_0}({\x}^{t+1}_0),\\
		\d^{t+1}_{k} := & \begin{bmatrix} 0\\ \rho\left(\z^{t+1}_k-\z^t_k\right) \end{bmatrix} \in  \begin{bmatrix} \tilde{\nabla} f_k({\x}_k^{t+1})\\- \y_k^{t+1}\end{bmatrix}	+ \mathcal{N}_{X_k}\left({\x}^{t+1}_k, ({\x}^{\text{base}}_k)^{t+1}\right), \ \forall k\in \cal{K},\label{eq:ctg_dual_res}\\
		0 = & \lmd_k + \beta \z^{t+1}_k + \y_k^{t} + \rho({\x}^{t+1}_0- ({\x}^{\text{base}}_k)^{t} + \z_k^{t+1}) = \lmd + \beta \z^{t+1}_k + \y_k^{t+1} , \ \forall k\in \cal{K},\label{eq:z_opt_condition}
	\end{align}
	\end{subequations}
	where $\tilde{\nabla} f_k({\x}_k^{t+1})\in \partial f_k({\x}_k^{t+1})$ in \eqref{eq:ctg_dual_res}. The dual update \eqref{eq:centralized_admm_dual} and \eqref{eq:z_opt_condition} further imply that
	\begin{align}
		\s^{t+1}_k := {\x}^{t+1}_0- ({\x}^{\text{base}}_k)^{t} + \z_k^{t+1} = \frac{1}{\rho}(\y^{t+1}_k-\y^{t}_k)=\frac{\beta}{\rho}(\z^{t}_k-\z^{t+1}_k)\ \forall k\in \cal{K}.
	\end{align}
	Recall $\rho >\beta > 1$, and hence $\beta/\rho  < 1 < \rho$. As a result, 
	\begin{align}\label{eq:bound_on_res}
		 & \max \left\{ \|\d^{t+1}_0\|, \|\d^{t+1}_1\|, \cdots, \|\d^{t+1}_{|\mathcal{K}|}\|,\|\s^{t+1}_1\|, \cdots, \|\s^{t+1}_{|\cal{K}|}\| \right \} \notag \\
		\leq & \rho \sum_{k\in \cal{K}} \left(\|({\x}^{\text{base}}_k)^{t+1} - ({\x}^{\text{base}}_k)^{t}\|+  \|\z^{t+1}_k-\z^t_k\|\right).
	\end{align}

	\item{Lower-boundedness of the augmented Lagrangian function.}\\
	Recall the augmented Lagrangian function defined in \eqref{eq:al_function}. For some fixed $\lmd$ and $\beta$, denote $$L^t(\lmd, \beta) := L_\rho(\x^t_0, \{\x^t_k, (\x^{\text{base}}_k)^{t} \}_{k\in \mathcal{K}}, \{{\z}^t_k\}_{k\in \mathcal{K}}, \{\y^t_k\}_{k\in \mathcal{K}})$$
	for $t\in \N$. We first show that $L^t(\lmd, \beta)$ is bounded from below. Notice that if we let $h_k(\z_k) := \langle \lmd_k, \z_k\rangle +\frac{\beta}{2}\|\z_k\|^2$, then it holds that $h_k(\z_k) + \nabla h_k(\z_k)^\top (\u - \z_k) \geq h_k(\u) - \frac{\beta}{2}\|\u-\z_k\|^2$ for any $u$, since $h_k$ has $\beta$-Lipchitz gradient. Consequently, for all $k\in \cal{K}$ and $t\in \N$, 
	\begin{align}\label{eq:smoothness_of_norm_squre}
	 & \langle \lmd_k, \z_k^t \rangle + \frac{\beta}{2}\|\z_k^t\|^2 + \langle \y^t_k, \x_0^t-(\x^{\text{base}}_{k})^{t}+{\z}_k^t \rangle \notag\\
		\geq & \langle \lmd_k, (\x^{\text{base}}_{k})^{t}-\x_0^t \rangle + \frac{\beta}{2}\|\x_0^t-(\x^{\text{base}}_{k})^{t}\|^2 - \frac{\beta}{2}\|\x_0^t-(\x^{\text{base}}_{k})^{t}+{\z}_k^t\|^2,
	\end{align}
	where we use the fact that $\y_k^t = -\lmd_k-  \beta {\z}_k^t =-\nabla h_k({\z}_k^t)$, and $\u = (\x^{\text{base}}_{k})^{t}-\x_0^t$. Using \eqref{eq:smoothness_of_norm_squre} inside the definition of $L^t(\lmd, \beta)$, we have
	\begin{align}\label{eq:alr_lower_bounded}
		L^t(\lmd, \beta) \geq & f_0(\x_0^t) +\sum_{k\in \cal{K}}\left( f_k(\x^t_k)+ \langle \lmd_k, (\x^{\text{base}}_{k})^{t}-\x_0^t \rangle + \frac{\beta}{2}\|\x_0^t- (\x^{\text{base}}_{k})^{t}\|^2 + \frac{\rho-\beta}{2}\|\x_0^t- (\x^{\text{base}}_{k})^{t}+{\z}_k^t\|^2\right) \notag \\
		\geq &  f_0(\x_0^t) +\sum_{k\in \cal{K}} \left(f_k(\x^t_k)- \frac{\|\lmd_k\|^2}{2\beta}\right) \notag \\
		\geq & \min \left\{\sum_{g\in \cal{G}} c_g(p_{g0}):\ p_{g0}\in [\underline{p}_g, \overline{p}_g], \ \forall g\in \cal{G} \right\} - \frac{\|\lmd\|^2}{\beta} := \underline{L}(\lmd, \beta),
	\end{align}
	where the second inequality is due to $$\langle \lmd_k, (\x^{\text{base}}_{k})^{t}-\x_0^t \rangle + \frac{\beta}{2}\|\x_0^t- (\x^{\text{base}}_{k})^{t}\|^2 = \frac{\beta}{2}\|\x_0^t- (\x^{\text{base}}_{k})^{t} -\frac{\lmd_k}{\beta}\|^2 - \frac{\|\lmd_k\|^2}{2\beta},$$
	and the last inequality is due to the definitions of $f_0$ and $f_k$ in \eqref{eq:abstrac_obj}.
	
	\item{Monotonicity of the augmented Lagrangian function.}\\
	Next we show the sequence $\{L^t(\lmd,\beta)\}_{t\in \Z}$ is non-increasing. By the dual update \eqref{eq:centralized_admm_dual}, we have
	\begin{align}
		L^{t+1}(\lmd,\beta) = & L_\rho(\x^{t+1}_0, \{\x^{t+1}_k, (\x^{\text{base}}_k\}_{k\in \mathcal{K}})^{t+1}, \{{\z}^{t+1}_k\}_{k\in \mathcal{K}}, \{\y^t_k\}_{k\in \mathcal{K}}) + \frac{1}{\rho} \sum_{k\in \cal{K}} \|\y^{t+1}_k-\y_k^t\|^2 \notag \\
		= &  L_\rho(\x^{t+1}_0, \{\x^{t+1}_k, (\x^{\text{base}}_k\}_{k\in \mathcal{K}})^{t+1}, \{{\z}^{t+1}_k\}_{k\in \mathcal{K}}, \{\y^t_k\}_{k\in \mathcal{K}}) + \frac{\beta^2}{\rho} \sum_{k\in \cal{K}} \|\z^{t+1}_k-{\z}_k^t\|^2  \notag \\
		= & \min_{\{{\z}_k\}_{k\in\cal{K}}} L_\rho(\x^{t+1}_0, \{\x^{t+1}_k, (\x^{\text{base}}_k\}_{k\in \mathcal{K}})^{t+1}, \{{\z}_k\}_{k\in \mathcal{K}}, \{\y^t_k\}_{k\in \mathcal{K}}) + \frac{\beta^2}{\rho} \sum_{k\in \cal{K}} \|\z^{t+1}_k-{\z}_k^t\|^2 \notag \\
		\leq &  L_\rho(\x^{t+1}_0, \{\x^{t+1}_k, (\x^{\text{base}}_k\}_{k\in \mathcal{K}})^{t+1}, \{{\z}^{t}_k\}_{k\in \mathcal{K}}, \{\y^t_k\}_{k\in \mathcal{K}}) + \frac{\beta^2}{\rho} \sum_{k\in \cal{K}} \|\z^{t+1}_k-{\z}_k^t\|^2 - \frac{\beta+\rho}{2}\|{\z}_k^{t+1}-{\z}_k^t\|^2,\label{eq:descent_in_z}
	\end{align}
	where the second equality is due to \eqref{eq:z_opt_condition}, and the last inequality is due $\{\z^{t+1}_k\}_{k\in \cal{K}}$ being the minimizer of a $(\beta+\rho)$-strongly convex function. Assumptions \eqref{eq:descent_assumption_base}-\eqref{eq:descent_assumption_ctg} and \eqref{eq:descent_in_z} together imply that
	\begin{align}\label{eq:alr_descent_sequence}
		 \sum_{k\in \cal{K}} \gamma \beta \|(\x_k^{\text{base}})^{t+1} - (\x_k^{\text{base}})^{t}\|^2 + \left(\frac{\beta+\rho}{2} - \frac{\beta^2}{\rho}\right)\|{\z}_k^{t+1} -{\z}_k^t\|^2 \leq L^t(\lmd, \beta)- L^{t+1}(\lmd, \beta). 
	\end{align}
	
	\item{Iteration Complexity.}\\
	For any positive integer $T$, there exists some index $0\leq t\leq T-1$ such that
	\begin{align}
		& \max \left\{ \|\d^{t+1}_0\|, \|\d^{t+1}_1\|, \cdots, \|\d^{t+1}_{|\mathcal{K}|}\|,\|\s^{t+1}_1\|, \cdots, \|\s^{t+1}_{|\cal{K}|}\| \right \} \notag \\
		 \leq & \rho \sum_{k\in \cal{K}} \left(\|(\x_k^{\text{base}})^{t+1} - (\x_k^{\text{base}})^{t}\| + \|{\z}_k^{t+1} -{\z}_k^t\| \right)\notag \\
		\leq & \rho (2|\cal{K}|)^{1/2}\left( \sum_{k\in\cal{K}} \|(\x_k^{\text{base}})^{t+1} - (\x_k^{\text{base}})^{t}\|^2 + \|{\z}_k^{t+1} -{\z}_k^t\|^2 \right)^{1/2} \notag \\
		\leq & \rho (2|\cal{K}|)^{1/2} \left(\frac{\overline{L}(\lmd,\beta) - \underline{L}(\lmd,\beta)}{T \min\{\gamma\beta, (\beta+\rho)/2-\beta^2/\rho\}}\right)^{1/2}.\label{eq:final_bound_on_res}
	\end{align}
	where the first inequality is due to \eqref{eq:bound_on_res}, the second inequality is due to $\sum_{i=1}^n a_i=e^\top a \leq \|e\|\|a\|$ for $a\in \R^n$ and $e=[1,\cdots, 1]^\top\in \R^n$, and the last inequality is due to \eqref{eq:alr_lower_bounded} and \eqref{eq:alr_descent_sequence}. Finally, plugging the upper bound \eqref{eq:admm_iter_complex} of $T$ into \eqref{eq:final_bound_on_res} proves the claim. \qed 
	\end{enumerate}
\endproof

\subsection{Proof of Theorem \ref{thm:alm_iter_complex}}\label{sec:proof_of_alm}
\begin{repeattheorem}[Theorem \ref{thm:alm_iter_complex}]
	Let $\epsilon >0$. Suppose the assumptions in Theorem \ref{thm:admm_iter_complex} hold, and each ADMM returns an $\epsilon$-stationary solution of the ALR problem (see Definition \ref{def:alr_approx_solution}). 
	Moreover, assume there exists some $0 < \overline{L} < +\infty$ such that $\overline{L}\geq \overline{L}(\lmd^r, \beta_r)$ for all $r\in \N$. Define constants
	\begin{itemize}
		\item[-] $\eta := \min\{\gamma, \frac{\tau+1}{2} - \frac{1}{\tau}\}$,
		\item[-] $\Lambda := \max \left\{ \sum_{k\in \cal{K}} \|\lmd_k\|^2 : \ \lmd_k \in [\underline{\lmd}_k, \overline{\lmd}_k], \ \forall k\in \cal{K}\right\}$,
		\item[-] $\underline{L} :=  \min \left\{\sum_{g\in \cal{G}} c_g(p_{g0}):\ p_{g0}\in [\underline{p}_g, \overline{p}_g], \ \forall g\in \cal{G} \right\}- \frac{\Lambda}{\beta_0}$, and $\delta_L :=\overline{L}- \underline{L}$.
	\end{itemize}
	Then the two-level ADMM (Algorithm \ref{alg:two_level}) finds an $\epsilon$-stationary solution of problem \eqref{eq:two_block_scopf} in at most $R$ outer-level updates, where
	\begin{align}\label{eq:outer_level_bd}
		R \leq \left \lceil   \log_c \left( \frac{4 \delta_L}{\beta_0  \epsilon^2} \right)\right \rceil,
	\end{align}
	 and a total of 
	\begin{align}\label{eq:total_iter_complex}
		\left\lceil \left( \frac{2c\tau^2 \beta_0|\cal{K}|\delta_L}{(c-1) \eta}\right)\left( \frac{c^R-1}{\epsilon^2} \right)\right \rceil + R	=\mathcal{O}(\epsilon^{-4})
	\end{align}
	inner-level ADMM iterations.
\end{repeattheorem}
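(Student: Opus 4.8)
The plan is to use Theorem~\ref{thm:admm_iter_complex} as a black box for the inner loop and to recognize that each outer iteration already supplies the dual-residual parts of Definition~\ref{def:scopf_approx_solution}, so that the only quantity the outer augmented-Lagrangian updates must drive down is the primal residual \(\r_k^r := \x_0^r - (\x_k^{\text{base}})^r\). First I would observe that the conditions \eqref{eq:base_case_dual_res} and \eqref{eq:ctg_dual_res_main} in the ALR stationarity notion (Definition~\ref{def:alr_approx_solution}) are verbatim the dual-residual requirements of the two-block stationarity notion (Definition~\ref{def:scopf_approx_solution}): the same multipliers \(\{\y_k\}\) and residuals \(\d_0,\d_k\) serve both. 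Hence, whenever the inner ADMM returns an \(\epsilon\)-stationary point of \eqref{eq:three_block_relaxation}, we automatically have \(\|\d_0^r\|,\|\d_k^r\|\le\epsilon\), and the whole task reduces to certifying \(\max_k\|\r_k^r\|\le\epsilon\) at the terminal outer index.

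The core estimate is a sandwich on the augmented Lagrangian value at the end of outer round \(r\). The monotonicity of \(\{L^t\}\) established inside the proof of Theorem~\ref{thm:admm_iter_complex} gives \(L^{T_r}(\lmd^r,\beta_r)\le\overline{L}(\lmd^r,\beta_r)\le\overline{L}\) by the uniform upper-bound assumption, while the definition of \(\Lambda\) together with \(\beta_r\ge\beta_0\) yields \(\underline{L}(\lmd^r,\beta_r)=\min\{\sum_g c_g(p_{g0})\}-\|\lmd^r\|^2/\beta_r\ge\underline{L}\). I would then sharpen the lower bound on \(L^{T_r}(\lmd^r,\beta_r)\): starting from the intermediate inequalities \eqref{eq:smoothness_of_norm_squre}--\eqref{eq:alr_lower_bounded} used in the inner proof, but instead of discarding the whole cross term I retain \(\tfrac{\beta_r}{2}\|\r_k^r\|^2-\langle\lmd_k^r,\r_k^r\rangle\) and keep the nonnegative contributions \(f_0(\x_0^r)+\sum_k f_k(\x_k^r)\ge\min\{\sum_g c_g(p_{g0})\}\) and \(\tfrac{\rho_r-\beta_r}{2}\|\s_k^r\|^2\ge0\). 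Applying Young's inequality in the form \(\langle\lmd_k^r,\r_k^r\rangle\le\tfrac{\beta_r}{4}\|\r_k^r\|^2+\tfrac{1}{\beta_r}\|\lmd_k^r\|^2\) and summing over \(k\) produces
\[
\overline{L}\ \ge\ L^{T_r}(\lmd^r,\beta_r)\ \ge\ \underline{L}+\frac{\beta_r}{4}\sum_{k\in\mathcal{K}}\|\r_k^r\|^2 ,
\]
and therefore \(\sum_{k}\|\r_k^r\|^2\le 4\delta_L/\beta_r\).

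With this estimate the outer count is immediate: since \(\beta_r=\beta_0 c^r\), the bound \(\max_k\|\r_k^R\|\le\bigl(\sum_k\|\r_k^R\|^2\bigr)^{1/2}\le\bigl(4\delta_L/\beta_R\bigr)^{1/2}\le\epsilon\) holds as soon as \(\beta_0 c^R\ge 4\delta_L/\epsilon^2\), which gives exactly \eqref{eq:outer_level_bd}. For the total inner count I would substitute \(\rho_r=\tau\beta_r\) into the per-round bound \eqref{eq:admm_iter_complex}, simplify the denominator via \(\min\{\gamma\beta_r,(\beta_r+\rho_r)/2-\beta_r^2/\rho_r\}=\eta\beta_r\), use \(\overline{L}(\lmd^r,\beta_r)-\underline{L}(\lmd^r,\beta_r)\le\delta_L\) so that each round costs at most \(\lceil 2\tau^2\beta_0 c^r|\mathcal{K}|\delta_L/(\eta\epsilon^2)\rceil\) inner iterations, and finally sum the geometric series \(\sum_{r=1}^R c^r\) (absorbing the \(R\) ceiling corrections) to recover \eqref{eq:total_iter_complex} and the \(\mathcal{O}(\epsilon^{-4})\) rate after plugging in \(c^R\le c\cdot 4\delta_L/(\beta_0\epsilon^2)\).

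The step I expect to be the main obstacle is the refined lower bound of the second paragraph: one must keep precisely the right terms from the inner-loop estimate and balance the sign-indefinite cross term \(\langle\lmd_k^r,\r_k^r\rangle\) so that the surviving coefficient of \(\sum_k\|\r_k^r\|^2\) is a clean positive multiple of \(\beta_r\); it is the factor \(\tfrac14\) from Young's inequality that makes the constant \(4\delta_L/\beta_r\) line up with the stated \(R\) bound. Everything else — the transfer of dual residuals across the two stationarity notions, the lower-boundedness through \(\Lambda\) and \(\beta_0\), and the geometric summation — is routine bookkeeping once this inequality is in place.
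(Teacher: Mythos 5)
Your proposal is correct and follows essentially the same route as the paper's own proof: the paper likewise reduces the matter to controlling the primal residuals $\r_k^R$ (since the dual residuals carry over directly from Definition~\ref{def:alr_approx_solution}), obtains $\overline{L}\ge\underline{L}+\tfrac{\beta_0c^R}{4}\sum_k\|\r_k^R\|^2$ from the first inequality in \eqref{eq:alr_lower_bounded} together with the same Young-type bound $\langle\lmd_k,\r_k^R\rangle\ge-\|\lmd_k\|^2/\beta_R-\tfrac{\beta_R}{4}\|\r_k^R\|^2$, and sums the per-round bound $T_r\le\tfrac{2\tau^2\beta_0|\mathcal{K}|\delta_L}{\epsilon^2\eta}c^r+1$ as a geometric series. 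No substantive differences.
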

\proof{Proof of Theorem \ref{thm:alm_iter_complex}.} We use $T_r$ to bound the number of iterations required by the $r$-th ADMM to return an $\epsilon$-stationary solution in the sense of Definition \ref{def:alr_approx_solution}. By Theorem \ref{thm:admm_iter_complex}, and  the fact that $\rho = \tau \beta_r = \tau \beta_0 c^r$, we know $T_r$ can be bounded by
\begin{align}
	 T_r \leq 	\frac{2\tau^2 \beta_0 |\mathcal{K}| \delta_L}{\epsilon^2 \eta } c^r + 1. \label{eq:single_admm_iter}
\end{align}
Let $R$ denote number of outer-level updates. Then by \eqref{eq:single_admm_iter}, the total number of inner ADMM iterations is bounded by 
\begin{align*}
	\sum_{r=1}^R T_k \leq \left\lceil \left( \frac{2c\tau^2 \beta_0|\cal{K}|\delta_L}{(c-1) \eta}\right)\left( \frac{c^R-1}{\epsilon^2} \right)\right \rceil + R.
\end{align*}
This proves the left-hand side of \eqref{eq:total_iter_complex}.

Next we bound the number of outer-level updates $R$. For $r\in [R]$, we use $\d_0^r$, $\d_1^r, \cdots, \d^r_{|\cal{K}|}$, and $\s_1^r,\cdots, \s^r_{|\cal{K}|}$ to denote the dual and primal residuals of ADMM corresponding to the solution $(\x^{r}_0, \{ \x^{r}_k, (\x^{\text{base}}_k)^{r}\}_{k\in \cal{K}},\{{\z}^{r}_k\}_{k\in \cal{K}})$. Notice that at the end of the $R$-th ADMM, we already have $\max\{\|\d_0^R\|, \|\d^R_1\|,\cdots, \|\d^R_{|\cal{K}|}\|\} \leq \epsilon$, so it remains to find a large enough index $R$ such that the quantity $\max\{\|\r^R_1\|,\cdots, \|\r^R_{|\cal{K}|}\|\} \leq \epsilon$ as well. By the first inequality in \eqref{eq:alr_lower_bounded} and the fact that $\langle \lmd_k, \r_k^R \rangle \geq -\frac{\|\lmd_k\|^2}{\beta_R} - \frac{\beta_R}{4} \|\r_k^R\|^2$, we have
\begin{align*}
	\overline{L} \geq &  \min \left\{\sum_{g\in \cal{G}} c_g(p_{g0}):\ p_{g0}\in [\underline{p}_g, \overline{p}_g], \ \forall g\in \cal{G} \right\} + \sum_{k\in \cal{K}}  -\frac{\|\lmd_k\|^2}{\beta_R} - \frac{\beta_R}{4} \|\r_k^R\|^2 + \frac{\beta_R}{2}\|r^R_k\|^2 \\
	\geq & \underline{L} + \frac{\beta_0 c^R} {4}\sum_{k\in \cal{K}} \|\r_k^R\|^2,
\end{align*}
which further implies that 
\begin{align*}
	\|\r_k^R\| \leq \left( \frac{4\delta_L}{\beta_0 c^R}\right)^{1/2}, \ \forall k\in \cal{K}.
\end{align*}
The claimed upper bound of $R$ in \eqref{eq:outer_level_bd} ensures $\|\r_k^R\|\leq \epsilon$ for all $k\in \cal{K}$. Notice that $c^R = \mathcal{O}(\epsilon^{-2})$, plugging which into the left-hand side of \eqref{eq:total_iter_complex} gives the $\mathcal{O}(\epsilon^{-4})$ upper bound on the total number of ADMM iterations. This completes the proof. \qed 
\endproof

\section{Dataset Characteristics and Further Data}
\subsection{Test Systems}
Table \ref{tab: network stat} provides the characteristics of $17$ network models adopted for computational experiments in the paper. Each row of this table corresponds to a network with $20$ different instances. The number of shunts in the table is the sum of both fixed and switched shunts. See \cite{GODataSet} for more details.
\begin{table}[]
\centering
\caption{Dataset characteristics.}
\label{tab: network stat}
\begin{tabular}{|c@{\hskip 0.02in}|c@{\hskip 0.02in}|c@{\hskip 0.02in}|c@{\hskip 0.02in}|c@{\hskip 0.02in}|c@{\hskip 0.02in}|c@{\hskip 0.02in}|c@{\hskip 0.01in}|}
\hline
\textbf{Network} & $|\calI|$ & \textbf{Loads} & \textbf{Shunts} & $|\calG|$         & $|\calE|$      & $|\calF|$     & $|\calK|$ \\ \hline \hline
Net. 02     & 500            & 281            & 36              & 224               & 540            & 193           & 727$\pm$34                 \\ \hline
Net. 03     & 793            & 568            & 99              & 214               & 770            & 143           & 92$\pm$8                   \\ \hline
Net. 06     & 2000           & 1010           & 124             & 384               & 2743           & 896           & 2478$\pm$57                \\ \hline
Net. 70     & 2312           & 1529           & 322             & 444               & 2156           & 857           & 1009$\pm$28                \\ \hline
Net. 75     & 2742           & 1832$\pm$2     & 15              & 186$\pm$6         & 3070$\pm$1     & 1602          & 2157$\pm$6                 \\ \hline
Net. 08     & 3022           & 1793           & 531             & 637               & 2838           & 1297          & 1924$\pm$48                \\ \hline
Net. 82     & 4285$\pm$316   & 3185$\pm$317   & 15$\pm$2        & 395$\pm$13        & 4825$\pm$320   & 2096$\pm$42   & 2509$\pm$19                \\ \hline
Net. 83     & 4020           & 2691           & 43              & 348$\pm$4         & 4636$\pm$14    & 2338          & 3018$\pm$15                \\ \hline
Net. 86     & 4619           & 3291$\pm$1     & 205$\pm$1       & 365$\pm$19        & 5739           & 2412$\pm$1    & 3043$\pm$18                \\ \hline
Net. 88     & 4837$\pm$1     & 2753           & 48$\pm$1        & 329$\pm$3         & 4746           & 3017$\pm$2    & 3432$\pm$3                 \\ \hline
Net. 09     & 4918           & 3070           & 729             & 1340              & 4412           & 2315          & 5076$\pm$4                 \\ \hline
Net. 12     & 9154$\pm$437   & 6239$\pm$420   & 114$\pm$7       & 365               & 10427$\pm$500  & 4942$\pm$46   & 5152$\pm$151               \\ \hline
Net. 13     & 10000          & 3984           & 564             & 2089              & 10819          & 2374          & 6071$\pm$3605              \\ \hline
Net. 14     & 10480          & 6860$\pm$11    & 248             & 777$\pm$5         & 12741$\pm$2    & 5186          & 8559$\pm$7                 \\ \hline
Net. 20     & 19139$\pm$263  & 12686$\pm$242  & 2442$\pm$10     & 947$\pm$26        & 22474$\pm$476  & 11585$\pm$169 & 13274$\pm$118              \\ \hline
Net. 25     & 24464$\pm$1    & 15810$\pm$1    & 19341$\pm$28    & 1589$\pm$2        & 27467$\pm$5    & 10345$\pm$1   & 3701$\pm$2                 \\ \hline
Net. 30     & 30000          & 10648          & 1247$\pm$28     & 3526              & 32020          & 3373          & 16070$\pm$5890             \\ \hline
\end{tabular}
\end{table}
\subsection{Penalty Function and Model Parameters}
\label{sec:Weights_and_Penalty Function_Parameters}
As mentioned in Section \ref{sec:SC-ACOPF_Model}, $c_{ik}^p(\cdot)$, $c_{ik}^q(\cdot)$, and $\sigma_{ek}^S(\cdot)$ are convex pwl increasing functions with three pieces. Let $s_{base}$ denote the base power which varies for different networks, and its value is provided in the dataset. For all of the above functions,
we set the length of the three pieces to $2/s_{base}$, $50/s_{base}$, and $\infty$, respectively. Moreover, the linear coefficient of the pieces are set to $s_{base}\times10^3$, $5s_{base}\times10^3$, and $s_{base}\times10^6$, respectively.
The approximation parameter in~\eqref{eq:SmoothedRealPowerDisjunction}, \eqref{eq:real_power_smooth_approx_lower_bounded}, and \eqref{eq:SmoothedConstraintApproximation} is set to \(\epsilon=10^{-6}\) to heuristically balance the trade-off between approximation accuracy and numerical condition.

\subsection{Contingency Selection and Algorithmic Parameters in the Two-level ADMM}\label{sec:admm_parameters}
In our experiments presented in Section \ref{subsec: admm}, we replace constraint \eqref{eq:scopf_realpowercntg} by the big-M relaxation of an equivalent mixed-integer representation (e.g., see \cite{GOChallenge1_formulation}) in the definition of $X_k$ for all $k\in \cal{K}'$, as we observed that the smooth approximation introduced in Section \ref{sec:smoothing} is more prone to numerical failures during ADMM iterations for the tested SC-ACOPF instances.
For the two-level algorithm, we choose $\tau = 2$, $\beta_0=2000$. The inner level is terminated if $\max_{k\in \cal{K}}\{\|\x^{t+1}_0 - (\x^{\text{base}}_k)^{t+1} + {\z}_k^{t+1}\|_{\infty}\}\leq 0.1/r$, where $r$ is the current outer iteration index. Such an inexact termination encourages more frequent updates on outer-level dual variables and penalty, which are updated according to the last paragraph in Section \ref{sec:two_level_admm}: we set $\beta^{r+1}=8\beta^r$ if $\max_{k\in \cal{K}'}\{\|\x^{r}_0 - (\x^{\text{base}}_k)^{r}\|_{\infty}\}> 0.5 \max_{k\in \cal{K}'}\{\|\x^{r-1}_0 - (\x^{\text{base}}_k)^{r-1}\|_{\infty}\}$, and $\beta^{r+1}=\beta^r$ otherwise.

\end{document}